

\documentclass[final,3p,times]{elsarticle}

\usepackage{amssymb}
\usepackage{amsmath}
\usepackage{amsthm}
\usepackage{mathtools}
\usepackage{bm}
\usepackage{subcaption}
\usepackage{xcolor}
\usepackage{graphicx}

\newtheorem{theorem}{Theorem}[section]
\newtheorem{lemma}[theorem]{Lemma}
\newtheorem{corollary}[theorem]{Corollary}
\newtheorem{definition}[theorem]{Definition}
\newtheorem{example}[theorem]{Example}
\newtheorem{remark}[theorem]{Remark}
\newtheorem{proposition}[theorem]{Proposition}

\newcommand{\R}{\mathbb{R}}
\newcommand{\diag}{\mathrm{diag}}
\newcommand{\rd}{\mathrm{d}}
\newcommand{\tr}{\mathrm{tr}}

\graphicspath{{figs/}}

\journal{...}

\begin{document}

\begin{frontmatter}

\title{Explicit invariant-preserving integration of differential equations using homogeneous projection}

\author[sintef]{Benjamin Kwanen Tapley}
\ead{ben.tapley@sintef.no}

\affiliation[sintef]{organization={Department of Mathematics and Cybernetics, SINTEF Digital},
            city={Oslo},
            country={Norway}}

\begin{abstract}
We develop a general framework for numerically solving differential equations while preserving invariants. As in standard projection methods, we project an arbitrary base integrator onto an invariant-preserving manifold, however, our method exploits homogeneous symmetries to evaluate the projection exactly and in closed form. This yields explicit invariant-preserving integrators for a broad class of nonlinear systems, as well as pseudo-invariant-preserving schemes capable of preserving multiple invariants to arbitrarily high precision. The resulting methods are high-order and introduce negligible computational overhead relative to the base solver. When incorporated into adaptive solvers such as Dormand–Prince 8(5,3), they provide error-controlled, invariant-preserving, high-order time-stepping schemes. Numerical experiments on double-pendulum and Kepler ODEs as well as semidiscretised KdV and Camassa-Holm PDEs demonstrate substantial improvements in both accuracy and efficiency over standard approaches.
\end{abstract}

\begin{keyword}
Invariant-preservation \sep Energy-preservation \sep Geometric integration \sep Projection methods \sep Hamiltonian \sep Adaptive methods \sep Gradient flows 
\end{keyword}

\end{frontmatter}

\section{Introduction}
Let $x(t)\in\mathbb{R}^n$ denote the solution to an autonomous ordinary differential equation (ODE) $\dot{x} = f(x),$
where $f(x)\in C^{p+1}(U,\R^{n})$ is a smooth vector field on compact $U\subset\R^{n}$ with exact flow\footnote{That is, $\frac{\rd}{\rd t}\varphi_t(x)=f(\varphi_t(x))$, $\varphi_s\circ\varphi_t(x)=\varphi_{s+t}(x)$ and $\varphi_0(x)=x$} $\varphi_t$. Then a function $H\in C^{1}(U,\R)$ is called an \emph{invariant} of $f(x)$ if $\dot{H}(x)=\nabla H(x)^\top f(x)=0$. That is, $H(\varphi_t(x))=H(x)$ is constant. The preservation of the invariants along solutions is usually an expression of deeper physical structure underlying the system, like a manifestation of symmetry through Noether's theorem, a conservation law or a physical constraint \cite{hairer2006geometric}. Respecting these features in simulations is therefore important for maintaining physical fidelity, long-term accuracy and stability of numerical solutions. However, achieving this in practice is not always straight forward. 

Standard numerical methods, such as Runge-Kutta methods are only known to preserve invariants for a very limited set of functions. For example, they always preserve rational affine invariants of the form $H(x)=\ell_1(x)/\ell_2(x)$, for affine forms $\ell_i(x)=a_i^Tx+\alpha_i$, where $a_i\in\R^n$, $\alpha_i\in\R$ and $\ell_2(x)\ne 0$. This is due to their recently discovered ability to preserve affine weak invariants \cite{tapley2023preservation}, although the affine case $\ell_2(x)=1$ has been known for a long time \cite{hairer1987solving}. For quadratic invariants, only symplectic Runge-Kutta methods are able to preserve these in general \cite{cooper1987stability, sanz2018numerical}, which are necessarily implicit and so require solving a system of nonlinear equations at each time step. For any invariant more complicated than these two cases, such as polynomials and general functions in $C^1(\R^n)$, one must resort to specialized invariant-preserving methods, of which there are several. These include discrete gradient methods \cite{mclachlan1999geometric}, projection methods \cite{norton2015discrete}, Hamiltonian boundary value methods \cite{brugnano2010hamiltonian}, auxiliary variable methods \cite{tapley2022geometric}, relaxation methods \cite{ketcheson2019relaxation, biswas2023multiple} and so forth. While most invariant-preserving methods are implicit, several linearly implicit methods are known to preserve invariants and only require solving a linear system at each time step. Examples include Kahan's method for certain cubic invariants \cite{celledoni2012geometric}, multi-step polarised methods for polynomial invariants \cite{celledoni2015discretization}. 

Although many invariant-preserving methods are often formulated for solving ODEs, they can be easily extended to semi-discretised partial differential equations (PDEs) and gradient flow problems. For example auxiliary variable methods \cite{shen2018scalar,cai2025explicit, zhang2020novel, jiang2022improving, obbadi2025stable, hong2025auxiliary}, Lagrange multiplier approaches \cite{cheng2025unique}, discrete gradients \cite{dahlby2011general}, Kahan methods \cite{eidnes2021linearly, uzunca2023linearly} and so forth. Many of these methods have the benefit of being linearly implicit, such as auxiliary variable methods, Kahan methods and polarisation methods. While, solving a linear system at each time step is often an acceptable price to pay for the benefit invariant preservation, it can still be expensive for large systems and are not usually applicable when needing to preserve a general invariant in the original phase space. Explicit methods, when all else equal, are preferred for their speed and simplicity.

While many of the aforementioned frameworks do an excellent job in preserving an invariant exactly, many PDEs of interest come with several conservation laws like conservation of mass, momentum, energy and so forth. A major challenge when it comes to numerically solving such PDEs is how to represent them in finite dimensions, a process known as semidiscretisation, such that the discretised system possesses similar conservation laws. It is not too difficult to find energy-preserving semidiscretisations that yield a system of ODEs that possess an exact conservation law, see for example \cite{celledoni2012preserving}. However, preserving one discrete conservation law under semidiscretisation often results in other conservation laws being lost in the process. This can result in instabilities due to blow up of high frequency modes that comprimise the simulation \cite{offen2019symplectic_clebsch, coppola2019discrete}. 

\subsection{Key contributions.}
In this paper, we develop a projection framework for deriving explicit invariant-preserving methods for a large class of invariant functions, as well as pseudo-invariant-preserving methods for a wider class of functions. The main contributions are summarised as follows:
\begin{enumerate}
\item \textit{Linear homogeneous projection methods.} We develop a method that exploits homogeneous symmetries of $H(x)$ to define a group action $\psi_t(x)$ that pushes forward the base step $\Phi_h(x)$ of order-$p$ to the invariant-preserving manifold, resulting in an order-$p$ composition method $\widehat{\Phi}_h(x) = \psi_h\circ\Phi_h(x)$. This has the advantage that we can achieve exact invariant-preservation without solving any nonlinear equations, provided $\psi_t(x)$ is known in closed form. We show that such a $\psi_t(x)$ can be evaluated as a cheap linear scaling of the state $x$ for a large class of invariants.
\item \textit{Pseudo-nonlinear homogeneous projection methods.} If $\psi_t(x)$ is not attainable in closed, we can instead approximate it with $r$ steps of another one-step integrator $\Psi_h(x)$ of order-$q$. This results in an order-$p$ method $\widehat{\Phi}_h(x) = \left(\Psi_h\right)^{r}\circ\Phi_h(x)$ that preserves invariants to an accuracy of $O(h^{(p+1)(q+1)^r})$ when a base method of accuracy $O(h^{p+1})$ is being used.
\item \textit{Error-controlled, open-source implementation.} We integrate our methods in popular adaptive implementations using \texttt{scipy.integrate\_ivp}, such as Dormand-Prince 8(5,3), yielding error-controlled invariant-preserving high-order time-stepping methods. The code is available at the author's github repository. We show that the methods yield significant improvements to accuracy and speed over baseline methods like the standard adaptive methods.
\item \textit{Regularization of semidiscrete PDEs.} We show empirically that even though a semi-discretised PDE may not exactly preserve discrete versions of the continouous invariants, a pseudo-invariant-preserving integration method can still improve the stability, physical fidelity and qualitative behaviour of the solution.
\end{enumerate}
The rest of the paper is structured as follows. In section 2 we review related work on invariant-preserving methods, adaptive geometric integrators and projection methods. In section 3 we present the theory of homogeneous projection methods for invariant-preservation. In sections 4 and 5 we discuss how to obtain homogeneous symmetry projections $\psi_h$. In section 6 we present numerical experiments on ODEs and address integrating semidiscretised PDEs in section 7. 
\section{Related work}
\paragraph{Symplectic vs. energy-preserving methods} Hamiltonian systems possess an invariant function called the Hamiltonian (or energy) as well as an invariant two-form called the symplectic form. Due to a theorem by Marsden and Ge \cite{zhong1988lie}, only time reparameterisations of the exact solution can simultaneously preserve energy and symplectic structure. So when developing a numerical method one must therefore make a choice between one or the other (or neither). It is not obvious in general which structure is more important to preserve in the numerical solution and the answer is probably very problem dependent. There is a case for both. 

Symplectic integrators preserve phase-space volume and possess many properties that make them favourable for long-time simulation \cite{leimkuhler2004simulating}. Symplectic methods also have rich theory and it can be shown that all symplectic methods are the exact flow of a nearby (modified) Hamiltonian system, meaning they preserve an energy. They are also known to have fast implementations available through splitting methods \cite{mclachlan2002splitting} when the Hamiltonian is separable and can be easily extended to systems contain simple (non-symplectic) forcing terms \cite{tapley2022computational,tapley2019novel}. In such situations, symplectic methods tend to be favoured.

However, despite their popularity, a major notable challenge with symplectic methods is their difficulty with their use in adaptive methods. Adaptive step control in symplectic integrators presents a significant obstacle because standard controllers use past-step error estimates, which breaks the symplectic structure, time-reversal symmetry and the backward-error modified-Hamiltonian picture \cite{gladman1991symplectic}. For this reason, to even retain time-reversal symmetry (a weaker property than symplecticity) specialized controllers are required \cite{hairer2005explicit}. This severely limits the use of symplectic methods as adaptivity is often necessary for problems with multiple time scales or close encounters, such as $n$-body problems, chemical reactions, chaotic systems and so forth. 

On the other hand, invariant-preserving methods have no such issues with their use in adaptive methods. This means that invariant-preserving methods can be used with adaptive step control without any special considerations. This is a significant advantage of invariant-preserving methods over symplectic methods that we will explore. Invariants also have physical interpretations in many systems, such as conservation laws, symmetries or constraints. Preserving energy exactly can therefore be important for the physical fidelity of the simulation. 

Furthermore, invariant-preserving methods can be applied to a broader class of systems than symplectic methods, which are mainly only used for Hamiltonian systems or perturbations thereof. Invariant-preserving methods can be applied to any ODE with invariants, such as Poisson systems, constrained mechanics, semi-discretised PDEs and so forth. Furthermore, invariant-preserving methods can be easily modified for systems possessing Lyapunov functions, dissipation, external forces, control terms, gradient flows and so forth. In such situations, one can easily control how the methods dissipate or grow the invariants at the correct rate \cite{mclachlan1999geometric}.
In this case, and especially in gradient flows of PDEs that diminish an energy, invariant-preserving methods are usually easy to adapt to preserve the dissipative structure \cite{fu2024energy}. 

\paragraph{Adaptive step size geometric methods}  
Despite the above difficulties, there do exist good frameworks for reversible step-size control methods. One such framework of Hairer and Söderlind \cite{hairer2005explicit} introduces a time reparametrization $\rd t = \varepsilon \rd\tau / \rho(\tau)$, where $\rho(\tau)$ and $x$ evolve according to an augmented, reversible system
\begin{equation}\label{hairer-soderlind}
x' = \frac{\varepsilon}{\rho(\tau)} f(x), \quad \rho' = G(x),
\end{equation}
where $x' = \frac{\rd x}{\rd \tau}$. Discretizing this extended system with a symmetric splitting method with stepsize $\tau_{n+1}-\tau_{n}=\varepsilon$ (i.e., a fixed step size method in $\tau$) yields the variable step size method in $t$ of step size $t_{n+1}-t_n = h_{1/2}=\varepsilon/\rho(\tau_{n+1/2})$. The integration in physical time $t$ can be done using any symmetric method $\Phi_{h_{1/2}}(x)$, such as a high order splitting scheme. This will be used for comparison purposes in the numerical experiments section. Hence, the auxiliary variable $\rho(\tau)$ acts as a step-density function, concentrating integration points where the state changes rapidly and stretching them where the motion is smooth. This results in an adaptive scheme that remains time-reversible and retains a lot of good qualitative properties of symplectic methods, unlike conventional error-based controllers that lose such properties. Similar adaptive methods are explored using more exotic time transformations, see e.g., \cite{blanes2005adaptive, blanes2012explicit} or variational approaches such as \cite{duruisseaux2021adaptive}.

The main advantages of time-reparameterisations such as the above are reversibility, compatibility with geometric integrators such as splitting methods, and the ability to adapt step sizes in a structure-preserving way, making it particularly effective for systems with varying time scales. Its limitations are that it is problem-dependent requiring a suitable control function $G(y)$, parameters such as $\varepsilon$ must be carefully tuned to avoid instability or excessive step variation. Most importantly, they do not directly control the local error, meaning that true error control is not guaranteed.

\paragraph{Extended phase space and auxiliary variable methods}
Many popular methods augment the phase space with auxiliary variables to reform the system into one that is easier to solve. One such method is the invariant energy quadrisation \cite{zhang2020novel} and scalar auxiliary variable methods \cite{shen2018scalar, bo2022arbitrary}, which reformulates a complicated invariant into a quadratic one on an extended ODE, which can be solved using linearly implicit methods. In \cite{bilbao2023explicit} they show that such methods can be made explicit and can improve stability over a symplectic method, however do not improve accuracy by much. As noted in \cite{zhang2021remark} auxiliary variable and energy quadrisation methods do not actually preserve the true invariant in the original phase space variables, but rather the modified energy on the extended space. This is due to the fact that the constraints introduced by the auxiliary variables are broken. This problem is addressed in \cite{chen2022novel, tapley2022geometric} by also preserving the auxiliary variable constraints exactly, resulting in true invariant-preserving methods. However, these methods are fully implicit and only work for polynomial invariants. We remark also that pseudo-symplectic methods have also been developed using extended phase space methods \cite{tao2016explicit,mclachlan2024runge}.

\paragraph{Projection methods and discrete gradients}
Discrete gradient methods are one of the earliest invariant preserving methods for preserving an arbitrary number of invariants. Despite their popularity, they face a number of limitations, such as being implicit, difficulty achieving high order methods \cite{eidnes2022order}, and difficulty acheiving simultaneous preservation of multiple invariants \cite{mclachlan1999geometric}. Although the last two points are theoretically possible, the resulting methods are often complicated to derive, problem dependent and computationally expensive. 

Projection methods are a useful and powerful tool for constructing numerical methods that preserve invariants. Surprisingly, projection has been shown to be a subset of discrete gradient methods \cite{norton2015discrete}, which together have been used copiously to construct conservative methods for ODEs and PDEs \cite{dahlby2011general, calvo2006preservation, kojima2016invariants, jiang2021explicit} to name a few. Projection methods are constructed by first stepping with a ``base method'' $\Phi_h(x)$, such as any one-step method, then the ``projected method'' is defined as follows. 
\begin{definition}[Projection method]\label{def:projection method}
  Let $\Phi_h(x)$ be a one-step method, then a projection method is given by
\begin{equation}\label{eq:proj method}
  \widehat{\Phi}_h(x) =  \Phi(x) + \lambda v(x),
\end{equation}
where $\lambda\in\R$ is chosen such that 
\begin{equation}\label{eq:proj cond}
  H(\Phi_h(x) + \lambda v(x)) = H(x)
\end{equation}
 and $v(x)\in\R^n$ is some direction vector.
\end{definition}  
The choice of $v(x)$ is somewhat arbitrary, but common choices include $v(x) = \nabla H(x)$, $v(x) = f(x)$ or $v(x) = f(\Phi_h(x))$. The choice of $v(x)$ can affect the stability and accuracy of the method, but in practice, the method is not too sensitive to a sensible choice. A drawback of projection methods is that one must solve the nonlinear equation \eqref{eq:proj cond} for $\lambda$ at each step, which can be expensive. 

If one prefers the speed of explicit methods and is okay with near preservation of invariants, then an alternative approach is to use pseudo-invariant-preserving methods, that preserve invariants to a higher order than the numerical method being used. As has been shown, such methods are equivalent to projection methods by solving for $\lambda$ with a single Newton iteration and are shown to be preserve invariants to an accuracy of $O(h^{2(p+1)})$ when a base method of accuracy $O(h^{p+1})$ is being used \cite{cai2020explicit}. One advantage of the projection and pseudo-projection methods is that they retain the same order of accuracy as the base method being used, meaning that achieving high order accuracy is trivial.


\section{Homogeneous projection} 
\subsection{Motivating example: a planar separable polynomial Hamiltonian}
We begin with a simple example to illustrate the method. Consider a Hamiltonian system with energy $H(q,p) = \frac{1}{\alpha}p^\alpha + \frac{a}{\beta}q^\beta$ for integers $\alpha, \beta > 0$. The equations of motion are given by
\begin{equation}\label{hamiltonian}
    \dot{q} = p^{\alpha-1}, \quad \dot{p} = -aq^{\beta-1}.
\end{equation}
Consider the forward Euler method applied to \eqref{hamiltonian} with step size $h>0$
\begin{equation}
    q' = q + h p^{\alpha-1}, \quad p' = p - h a q^{\beta - 1}.
\end{equation}
We now apply a map $\psi$ called ``homogeneous projection'' (defined later), which is given by
\begin{equation}\label{proj fe}
        \psi(q',p')=(\lambda^{\frac{1}{\beta}}q',\lambda^{\frac{1}{\alpha}}p'),
\end{equation}
for some free parameter $\lambda$. Calculating the energy gives
\begin{equation}
    H(\psi(q',p')) = H(\lambda^{\frac{1}{\beta}} q',\lambda^{\frac{1}{\alpha}}p') = \frac{\lambda}{\alpha} p'^\alpha + \frac{a\lambda}{\beta} q'^\beta = \lambda H(q',p').
\end{equation} 
The important point here is that the energy scales by $\lambda$ under the pullback of $\psi$, which is a due to $H$ being a homogeneous function. We can therefore leverage this property to select $\lambda$ such that this exactly matches the desired energy value. Defining $\lambda := \frac{H(q,p)}{H(q',p')}$ gives 
\begin{equation}
    H(\psi(q',p')) = H(q,p),
\end{equation}
hence the method $\psi(q',p')$ preserves the energy exactly and we have used no nonlinear solves. Note that $ \lambda = 1 + O(h^2)$ hence the method also retains its first order accuracy. Homogeneous functions and related concepts have played a key role in the mathematics and physics dating back to Euler. For example in control theory \cite{polyakov2020generalized,rosier1992homogeneous} where homogeneous Lyapunov functions are used to show stability of ODEs or in discrete integrable systems \cite{celledoni2019using, celledoni2019detecting} where they sometimes go by Darboux polynomials or weak invariants. However, their use in numerical methods have so far been limited. 

The rest of this section will formalise the above idea into a general numerical method. In section \ref{sec:linear homogeneous symmetries} we which functions $H(x)$ possess a \textit{linear} projection $\psi$ that can be evaluated cheaply and in closed form. In section \ref{sec:nonlinear homogeneous symmetries} we extend the method a broader class of functions where $\psi$ is a non-linear action. 
\subsection{General framework}
We now define a general notion of homogeneity, with respect to a flow, and discuss more familiar specific examples of homogeneity later.
\begin{definition}[Homogeneous function]
  Let $\psi_t$ be the flow of $g(x)\in C^{q+1}(U)$. Then a function $H \in C^1(U)$ is said to be homogeneous of degree $k$ with respect to $\psi_t$ if
\begin{equation}\label{hom cond}
    H(\psi_t(x))=e^{tk}H(x).
\end{equation}
Its infinitesimal condition known as Euler's equation for homogeneous functions is
\begin{equation}\label{euler eq}
    g(x)^T\nabla H(x)=k H(x)
\end{equation}
where $g(x)$ is called the infinitesimal generator of $\psi_t$.
\end{definition}
That is, the flow $\psi_t(x)$ modifies the invariant $H(x)$ by a multiplicative factor $e^{tk}$. An important point here is that $\psi_t(x)$ depends on $H(x)$ only, and is not necessarily related to the underlying ODE $\dot{x}=f(x)$.

Letting $\psi_t(x) = e^{t} x$ and $\lambda=e^{t}$ (otherwise known as isotropic scaling) recovers the most basic notion of a homogeneous function studied by Euler: functions satisfying $H(\lambda x) = \lambda^k H(x)$, with the infinitesimal condition $x^T\nabla H(x)=k H(x)$.

Let $\Phi_h(x)$ be an approximation to $\varphi_h$. We say that it is order-$p$ if for some constant $C$ we have
$$
\sup_{x\in U}\|\Phi_h(x)-\varphi_h(x)\|\le C\,h^{p+1},
$$
as $h\to 0$ for compact $U\subset\R^n$. As $\Phi_h(x)$ is not necessarily invariant-preserving, we have the energy-drift $H(\Phi_h(x)) = H(x) + \mathcal{O}(h^{p+1}).$ The idea is that if $H$ is homogeneous with respect to some $\psi_t$, then we can construct explicit post-processing corrections that exactly restore desired target values via the following method.
\begin{definition}[Homogeneous projection method]\label{def:hom proj method}
    Let $H$ be homogeneous with respect to $\psi_t$ according to \ref{def:hom proj method} and assume $H(x)H(\Phi_h(x))>0$, that is, $H(x)$ and $H(\Phi_h(x))$ are both non-zero and of the same sign. Then the method
    \begin{equation}
      \widehat\Phi_{h}:=\psi_{s}\circ\Phi_h(x),\quad\text{where} \quad s:= \frac{1}{k}\log\left(\frac{H(x)}{H(\Phi_h(x))}\right)
    \end{equation}
    is called the homogeneous projection of $\Phi_h(x)$.
\end{definition}
We note that $s$ can always be calculated purely in terms of $H(x)$ and $H(\Phi_h(x))$, hence requires no nonlinear solves. Furthermore, if $\Phi_h(x)$ is an order-$p$ method, we can show that the correction $\psi_t$ is small enough to not affect the order of the method meaning $\widehat\Phi_{t}(x)$ maintains the order of $\Phi_h(x)$. This is summarised by the following, which resembles many similar results on order retention for projection methods \cite{norton2015discrete, calvo2006preservation}.  
\begin{theorem}[Invariant-preservation and order-retention]\label{thm:hom proj method}
    The homogeneous projection method $\widehat{\Phi}_h$ satisfies the following:
    \begin{enumerate}
    \item $H(\widehat\Phi_{h}(x)) = H(x)$ (invariant-preserving), and 
    \item $\|\widehat\Phi_{h}(x)-\varphi_h(x)\| = \mathcal{O}(h^{p+1})$ (same order as $\Phi_h$).
    \end{enumerate}
\end{theorem}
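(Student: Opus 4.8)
The plan is to dispatch item~1 by a direct substitution and item~2 by a triangle-inequality argument that reduces matters to showing the correction $\psi_s$ is an $O(h^{p+1})$ perturbation of the identity, uniformly on $U$.

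For item~1, writing $y=\Phi_h(x)$ and inserting the definition of $s$ into the homogeneity relation \eqref{hom cond} gives
\[
H(\widehat\Phi_h(x)) = H(\psi_s(y)) = e^{sk} H(y) = \exp\!\left(\log\frac{H(x)}{H(y)}\right) H(y) = \frac{H(x)}{H(y)}\,H(y) = H(x) ,
\]
and the standing hypothesis $H(x)H(\Phi_h(x))>0$ is exactly what is needed for the logarithm to be defined (positive argument) and for no division by zero to occur. That settles item~1 with no further work.

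For item~2, I would split $\widehat\Phi_h(x)-\varphi_h(x) = \bigl(\psi_s(\Phi_h(x))-\Phi_h(x)\bigr) + \bigl(\Phi_h(x)-\varphi_h(x)\bigr)$; the second term is $O(h^{p+1})$ by the order-$p$ assumption on $\Phi_h$, so it suffices to bound the first term by $O(h^{p+1})$ uniformly in $x\in U$. This I would do in two steps. First, bound the scaling time: since $H\in C^1(U)$ is Lipschitz on the compact set $U$ with some constant $L$, and since $\varphi_h$ preserves $H$,
\[
|H(\Phi_h(x))-H(x)| = |H(\Phi_h(x))-H(\varphi_h(x))| \le L\,\|\Phi_h(x)-\varphi_h(x)\| = O(h^{p+1}) ;
\]
combined with a uniform lower bound $|H|\ge c>0$ on $U$ (a consequence of continuity and the sign hypothesis, after restricting to a compact subset if needed), this yields $H(x)/H(\Phi_h(x)) = 1+O(h^{p+1})$ and hence $s=\tfrac1k\log\bigl(1+O(h^{p+1})\bigr)=O(h^{p+1})$. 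Second, bound the displacement of the flow: since $\psi_t$ is the flow of $g\in C^{q+1}(U)$, we have $\psi_t(y)=y+\int_0^t g(\psi_\tau(y))\,\rd\tau$, so $\|\psi_t(y)-y\|\le |t|\sup_{U}\|g\|$ for every $y$ and every $|t|$ small enough that the trajectory stays in $U$; since $s=O(h^{p+1})$, for $h$ small this holds uniformly in $x$, giving $\|\psi_s(\Phi_h(x))-\Phi_h(x)\|\le |s|\sup_U\|g\| = O(h^{p+1})$. Adding the two terms finishes item~2.

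The one place I would be careful is precisely the passage from the additive estimate $H(\Phi_h(x))-H(x)=O(h^{p+1})$ to the multiplicative estimate $H(x)/H(\Phi_h(x))=1+O(h^{p+1})$: this needs $|H|$ bounded away from zero on the set under consideration, and likewise the flow $\psi_t$ needs a uniform existence time $t_0>0$ for $|t|\le t_0$. Both are standard consequences of compactness of $U$ together with the positivity hypothesis $H(x)H(\Phi_h(x))>0$ (pass, if necessary, to a slightly smaller compact set on which $H$ is bounded below and $\psi_t$ is defined for $|t|\le t_0$); everything else is the triangle inequality and elementary asymptotics.
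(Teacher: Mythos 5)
Your proof is correct and follows essentially the same route as the paper's: direct substitution for invariance, then a triangle-inequality split with the bounds $|s|=O(h^{p+1})$ (via Lipschitz continuity of $H$ and invariance under $\varphi_h$) and $\|\psi_s(y)-y\|\le|s|\sup_U\|g\|$. If anything, you are more explicit than the paper about the two technical points it leaves implicit — the uniform lower bound $|H|\ge c>0$ needed to pass from the additive to the multiplicative estimate, and the uniform existence time of the flow $\psi_t$ on a compact set.
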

\begin{proof}
    We have
    \begin{align*}
        H(\widehat\Phi_{h}(x)) &= H(\psi_{s}\circ\Phi_h(x)) = e^{ks}H(\Phi_h(x)).
    \end{align*}
    Choosing $s = \frac{1}{k}\log\left(\frac{H(x)}{H(\Phi_h(x))}\right)$ solves $e^{ks}H(\Phi_h(x)) = H(x)$, which yields (1). To show (2), denote the energy error as $\delta_h :=  H(\Phi_h (x))-H(\varphi_h(x))$. Since $H$ is $C^1$ and $\Phi_h$ has local order $p$, there exists some $L_H>0$ on a compact neighborhood such that
    $$
      |\delta_h| \le L_H\,\|\Phi_h (x)-\varphi_h(x)\|\;=\;\mathcal{O}(h^{p+1}).
    $$
    Insert the defintion for $\delta_h $ into $s$ to obtain
    $$
      s = -\,\frac{1}{k}\log\!\left(1+\frac{\delta_h}{H(x)}\right),
    $$
    to which we apply the Taylor truncation  $\log(1+u)=u+\mathcal{O}(u^2)$ to get
    $$
      |s|\;\le\; C_s\,|\delta_h|\;=\;\mathcal{O}(h^{p+1})
    $$
    for some constant $C_s$ independent of $h$. Next, assuming on compact neighbourhoods we have $\|g(\xi)\|\le C_g$ meaning $\|\psi_s(z)-z\|\le C_g |s|$. Applying the triangle inequality yields
    $$
      \|\psi_s\circ\Phi_{h}(x)-\varphi_h(x)\|
      \;\le\; \underbrace{\|\Phi_h(x)-\varphi_h(x)\|}_{=\mathcal{O}(h^{p+1})}
      \;+\; \underbrace{\|\psi_{s}(\Phi_h(x))-\Phi_h(x)\|}_{\le C_g |s|=\mathcal{O}(h^{p+1})}
      \;=\;\mathcal{O}(h^{p+1}),
    $$
    which proves (2).
\end{proof}

Homogeneous projection relies on the existence of some flow $\psi_t$ that satisfies \eqref{hom cond}. In section \ref{sec:linear homogeneous symmetries} and \ref{sec:nonlinear homogeneous symmetries} we discuss how to find such flows for a large class of functions $H(x)$.

\begin{remark}[Pseudo-invariant-preserving methods]\label{rem:pseudo-energy}
   If such a $\psi_t$ is not known for a particular $H(x)$, it can be approximated to order $\mathcal{O}(h^{q+1})$ using an order-$q$ one-step method $\Psi_t\approx\psi_t$. This yields pseudo-invariant-preserving methods. This is formalised for the general case is shown in theorem \ref{thm:pseudo-energy} in section \ref{sec:nonlinear homogeneous symmetries}.
\end{remark}

Notice that the method can be easily generalised to ODEs that modulate the invariant at a specified rate. For example, linear dissipation, where if $\dot{H}(x)=-\alpha H(x)$, for some positive $\alpha\in\R$ then $H(\varphi_h(x))=e^{-\alpha h}H(x)$ (actually in this case $H(x)$ is also homogeneous with respect to $\varphi_t$). So to construct a homogeneous projection method that modulates invariants, one can simply project onto the surface satisfying $H(\widehat{\Phi}_h(x))=H(\varphi_h(x))$. This is summarised in the following corollary.

\begin{corollary}[Invariant-dissipating methods]
  Let $H(x)$ be homogeneous with respect to $\psi_t$, $\Phi_h(x)$ an order-$p$ method and $ H(\varphi_h(x))=\alpha(x, h)$ for some known $\alpha(x, h)$. Then the method
  $$
    \widehat\Phi_{h}:=\psi_{s}\circ\Phi_h(x), \quad s:= \frac{1}{k}\log\left(\frac{\alpha(x, h)}{H(\Phi_h(x))}\right)
  $$
  satisfies $H(\widehat\Phi_{h}(x)) = \alpha(x, h)$, and is order-$p$.
\end{corollary}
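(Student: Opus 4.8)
The plan is to mimic the proof of Theorem~\ref{thm:hom proj method} almost verbatim, since the only structural change is that the target value $H(x)$ is replaced by the known quantity $\alpha(x,h)=H(\varphi_h(x))$. As in that theorem I would first record the standing nondegeneracy assumption $\alpha(x,h)\,H(\Phi_h(x))>0$, so that the logarithm defining $s$ is well posed; this holds automatically for $h$ small when $H(x)\neq 0$ on the compact set $U$, because $\alpha(x,h)\to H(x)$ and $H(\Phi_h(x))\to H(x)$ as $h\to 0$.

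For claim (1), I would apply the homogeneity identity \eqref{hom cond} with argument $z=\Phi_h(x)$, giving $H(\widehat\Phi_h(x))=H(\psi_s(\Phi_h(x)))=e^{ks}H(\Phi_h(x))$. Substituting $s=\frac1k\log\!\big(\alpha(x,h)/H(\Phi_h(x))\big)$ makes $e^{ks}=\alpha(x,h)/H(\Phi_h(x))$, so the product telescopes to $\alpha(x,h)=H(\varphi_h(x))$; hence the method reproduces the exact modulation of the invariant.

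For claim (2), set $\delta_h:=H(\Phi_h(x))-H(\varphi_h(x))=H(\Phi_h(x))-\alpha(x,h)$. Since $H\in C^1$ and $\Phi_h$ has local order $p$, on a compact neighbourhood there is $L_H>0$ with $|\delta_h|\le L_H\,\|\Phi_h(x)-\varphi_h(x)\|=\mathcal{O}(h^{p+1})$. Writing $s=-\frac1k\log\!\big(1+\delta_h/\alpha(x,h)\big)$ and using $\log(1+u)=u+\mathcal{O}(u^2)$ together with $\alpha(x,h)$ bounded away from $0$ (valid for small $h$ as above) yields $|s|\le C_s\,|\delta_h|=\mathcal{O}(h^{p+1})$ for a constant $C_s$ independent of $h$. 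Finally, with $\|g\|\le C_g$ on the relevant compact set one has $\|\psi_s(z)-z\|\le C_g\,|s|$, and the triangle inequality
$$
\|\widehat\Phi_h(x)-\varphi_h(x)\|\le\|\Phi_h(x)-\varphi_h(x)\|+\|\psi_s(\Phi_h(x))-\Phi_h(x)\|=\mathcal{O}(h^{p+1})
$$
gives the stated order.

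I do not expect a genuine obstacle here beyond bookkeeping; the only point needing care is ensuring the logarithm is well defined and that $\alpha(x,h)$ stays bounded away from zero uniformly for small $h$, which follows from continuity of $H$, the convergence $\varphi_h(x),\Phi_h(x)\to x$, and the assumption $H(x)\neq 0$ on $U$. In particular, the whole argument reduces to the observation that replacing the constant target $H(x)$ by a target $\alpha(x,h)$ that itself differs from $H(x)$ by $\mathcal{O}(h^{p+1})$ does not change any of the estimates in Theorem~\ref{thm:hom proj method}.
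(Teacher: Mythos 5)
Your proposal is correct and follows exactly the paper's approach: the paper's own proof simply states that the argument is identical to Theorem~\ref{thm:hom proj method} with $H(x)$ replaced by $\alpha(x,h)$ in the definition of $s$, which is precisely what you carry out. Your additional care about the logarithm being well posed and $\alpha(x,h)$ staying bounded away from zero is a reasonable elaboration of the same argument, not a different route.
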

\begin{proof}
  The proof is identical to theorem \ref{thm:hom proj method}, except that we replace $H(x)$ with $\alpha(x, h)$ in the definition of $s$.
\end{proof}
Though it is not obvious from the defintion, the homogeneous projection method is a special case of the usual projection method.
\begin{proposition}[Equivalence with standard projection methods]
  Let $\widehat{\Phi}_h(x)$ be a homogeneous projection method as per definition \ref{def:hom proj method}, then $\widehat{\Phi}_h(x)$ is a projection method according to definition \ref{def:projection method}. 
\end{proposition}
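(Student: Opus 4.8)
The plan is to realise $\widehat{\Phi}_h(x)$ in the form $\Phi_h(x)+\lambda v(x)$ of Definition~\ref{def:projection method} by reading off an admissible scalar $\lambda$ and direction vector $v(x)$ directly from the homogeneous correction. First I would recall that, by Definition~\ref{def:hom proj method}, $\widehat{\Phi}_h(x)=\psi_{s}\bigl(\Phi_h(x)\bigr)$ with $s=\tfrac{1}{k}\log\!\bigl(H(x)/H(\Phi_h(x))\bigr)$; the displacement induced by the symmetry flow is therefore the vector
\[
  v(x):=\psi_{s}\bigl(\Phi_h(x)\bigr)-\Phi_h(x)\in\R^{n}.
\]
Taking $\lambda:=1$ gives $\widehat{\Phi}_h(x)=\Phi_h(x)+\lambda v(x)$, that is, exactly the functional form \eqref{eq:proj method}. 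This choice is uniform in $x$: in the degenerate situation $H(x)=H(\Phi_h(x))$ it simply returns $s=0$, $v(x)=0$ and $\widehat{\Phi}_h(x)=\Phi_h(x)$, which still fits Definition~\ref{def:projection method} (with $\lambda=0$ if one prefers a nonzero direction).

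Second, I would check that this $\lambda$ meets the projection condition \eqref{eq:proj cond}. Since $\Phi_h(x)+\lambda v(x)=\psi_{s}\bigl(\Phi_h(x)\bigr)=\widehat{\Phi}_h(x)$, Theorem~\ref{thm:hom proj method}(1) yields $H(\Phi_h(x)+\lambda v(x))=H(\widehat{\Phi}_h(x))=H(x)$, which is precisely \eqref{eq:proj cond}. Hence $\widehat{\Phi}_h$ fulfils every requirement of Definition~\ref{def:projection method}, and the proposition follows.

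There is no genuinely hard step here; the only point worth making explicit is that Definition~\ref{def:projection method} imposes no restriction on the direction vector beyond $v(x)\in\R^{n}$, so the flow-induced direction $v(x)=\psi_{s}(\Phi_h(x))-\Phi_h(x)$ is an admissible choice, just as much as the customary choices $\nabla H(x)$, $f(x)$ or $f(\Phi_h(x))$. I would close by remarking that in the isotropic case $\psi_t(x)=e^{t}x$ this collapses to the familiar scaling-direction projection, $v(x)=\Phi_h(x)$ with multiplier $\lambda=e^{s}-1=\bigl(H(x)/H(\Phi_h(x))\bigr)^{1/k}-1$, so homogeneous projection is nothing but a closed-form, nonlinear-solve-free instance of standard projection in which the linear scaling $x\mapsto e^{s}x$ is replaced by the (possibly nonlinear) group action $\psi_{s}$ determined by the homogeneity of $H$.
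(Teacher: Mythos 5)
Your proof is correct and follows essentially the same route as the paper: both take $\lambda=1$ and let $v(x)$ be the displacement induced by the symmetry flow at $\Phi_h(x)$ (the paper merely writes this displacement as $\int_0^t g(\psi_s(\Phi_h(x)))\,ds$ via the generator, which equals your $\psi_s(\Phi_h(x))-\Phi_h(x)$). Your explicit verification of the projection condition \eqref{eq:proj cond} via Theorem~\ref{thm:hom proj method}(1) is a small addition the paper leaves implicit, but the argument is the same.
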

\begin{proof}
  As $\psi_t(x)$ is the flow of $g(x)$, we can write $\psi_t(x) = x + \int_0^t g(\psi_s(x)) ds.$ Then letting $v(x) = \int_0^t g(\psi_s(\Phi_h(x))) ds$ and $\lambda=1$, $\widehat{\Phi}_h(x)$ is of the form \eqref{eq:proj method}. 
\end{proof}
This is especially clear when $\psi_t(x)$ is linear, in which case $\psi_t(x) = e^{tA}x$ for some matrix $A\in\R^{n\times n}$ and $g(x) = Ax$. Then the homogeneous projection method can be written as
$$\widehat{\Phi}_h(x) = e^{sA}\Phi_h(x) = \Phi_h(x) + (e^{sA}-I)\Phi_h(x),$$
which is of the form \eqref{eq:proj method} with $\lambda=1$, $v(x) = (e^{sA}-I)\Phi_h(x)$ and satisfies $H(\widehat{\Phi}_h(x)) = H(x)$. In other words, the homogeneous projection method is a standard projection method expressed using the flow of a vector field $g(x)$.

As is the case with all projection methods, the projection operator does not preserve other geometric properties of the base method $\Phi_h(x)$, such as symplecticity, time-reversal symmetry, volume-preservation, affine equivariance and so forth. However, the order of accuracy is retained as shown in theorem \ref{thm:hom proj method}. Due to this, it is natural to explore their use in error controlled methods, where geometric properties are notoriously difficult to enforce.

\subsection{Multiple homogeneous invariants}
Often ODEs possess multiple invariants, say $(H_1(x),...,H_m(x))^T\in\R^m$. If each $H_i(x)$ is homogeneous with respect to $e^{s_jA_j}$ with weight $k_{ij}$, i.e., $H_i(e^{s_j A_j} x) = e^{k_{ij} s_j} H_i(x)$, then it is natural to ask if one can preserve these invariants simultaneously by the linear flow $\psi_s = e^{s_1 A_1}...e^{s_m A_m}$. In other words, given the $m$ generators $A_i$, find the $m$ parameters $s = (s_1,...,s_m)\in\R^m$ such that $(H_1(x),...,H_m(x))^T$ is homogeneous with respect to $\psi_s$. We now show that this is sometimes possible but requires a strict set of conditions to be met. 
\begin{lemma}
   Let $A_i\in\R^{n\times n}$ be a set of commuting generators for $i=1,...,m$, where $[A_i,A_j] = 0$, and $H_i(x)$, $\psi_s$ and $k_{ij}$ as given above. Then, if the degree matrix $K=[k_{ij}]$ is invertible, the homogeneous projection method $\widehat{\Phi}_h = \psi_s\circ\Phi_h$, preserves $H(x)$, with $s$ being the solution to the linear system
\begin{equation}\label{eq:Kstar-system}
s = K^{-1}b,
\end{equation}
for $b_i =\log\!\frac{H_i(x)}{H_i(\Phi_h(x))}$.
\end{lemma}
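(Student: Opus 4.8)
The plan is to compute $H_i(\widehat{\Phi}_h(x))$ directly using the homogeneity of each $H_i$ with respect to each generator flow $e^{s_j A_j}$, and then choose the parameter vector $s$ so that the accumulated multiplicative factors restore each target value $H_i(x)$ simultaneously. The key structural fact I would invoke is that the generators commute, $[A_i,A_j]=0$, so the composite flow $\psi_s = e^{s_1 A_1}\cdots e^{s_m A_m}$ is itself a well-defined linear map and the order in which the individual flows are applied is irrelevant.

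First I would fix an index $i$ and apply the flows one at a time to $z := \Phi_h(x)$. Using the homogeneity relation $H_i(e^{s_j A_j} y) = e^{k_{ij} s_j} H_i(y)$ repeatedly — which is legitimate because each intermediate point is again in the domain and the relation holds for every argument $y$ — I get a telescoping product
\begin{equation*}
H_i(\psi_s(z)) = H_i\bigl(e^{s_1 A_1}\cdots e^{s_m A_m} z\bigr) = \exp\!\Bigl(\sum_{j=1}^m k_{ij} s_j\Bigr)\, H_i(z) = e^{(Ks)_i}\, H_i(z).
\end{equation*}
(Here the commutativity of the $A_j$ is what guarantees that this factorisation is unambiguous; without it one would still get a product of the same scalar factors, but it is cleaner to state with commuting generators, and the assumption is in force.) Then I would impose the invariant-preservation requirement $H_i(\widehat{\Phi}_h(x)) = H_i(x)$ for every $i$, which reads $e^{(Ks)_i} H_i(\Phi_h(x)) = H_i(x)$, i.e. $(Ks)_i = \log\frac{H_i(x)}{H_i(\Phi_h(x))} = b_i$. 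Stacking over $i$ gives the linear system $Ks = b$, and since $K$ is assumed invertible this has the unique solution $s = K^{-1}b$, establishing the claim. As in Theorem~\ref{thm:hom proj method}, the sign hypothesis $H_i(x) H_i(\Phi_h(x)) > 0$ (implicitly needed for the $b_i$ to be real) would be assumed, mirroring the single-invariant case.

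I do not expect a serious obstacle here; the only point requiring a little care is making the telescoping argument rigorous — specifically, checking that the homogeneity identity may be applied at each stage regardless of the base point, and noting that commutativity of the $A_j$ makes $\psi_s$ genuinely independent of the ordering (so the method is well-posed). One could additionally remark that order-retention follows exactly as in Theorem~\ref{thm:hom proj method}: each $s_j = O(h^{p+1})$ since $b = O(h^{p+1})$ and $K^{-1}$ is a fixed bounded matrix, so $\|\psi_s(z)-z\| = O(h^{p+1})$ by the same Lipschitz-in-$s$ estimate for the composite linear flow, though this is not part of the stated lemma.
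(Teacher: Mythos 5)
Your proposal is correct and follows essentially the same route as the paper: apply the homogeneity relation factor by factor to obtain $H_i(\psi_s(\Phi_h(x))) = \exp\bigl(\sum_j k_{ij}s_j\bigr)H_i(\Phi_h(x))$, impose $H_i(\widehat{\Phi}_h(x))=H_i(x)$, take logarithms to get $Ks=b$, and invert $K$. Your explicit remarks on the sign hypothesis $H_i(x)H_i(\Phi_h(x))>0$ and on the role of commutativity are consistent with (and slightly more careful than) the paper's own proof.
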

\begin{proof}
Let $\tilde{x}=\Phi_h(x)$. We want to be able to compute the parameters $s_j$ for $j=1,...,m$ such that $\psi_t$ projects each $H_i(\tilde x)$ onto the invariant preserving manifold. That is $H_i(\psi_s(\tilde{x}))=H_i(x)$ for each $i$. Then under the linear group action $\psi_s = e^{s_1 A_1}...e^{s_m A_m}$ we have
\begin{equation}
    H_i(\psi_s(\tilde{x})) = H_i(\prod_{j}^m e^{s_j A_j} \tilde{x}) = \exp\!\Big(\sum_{j=1}^m k_{ij}\,s_j\Big)\,H_i(\tilde{x}).
\end{equation}
So $s_j$ must be chosen so that the above equals $H_i(x)$ for each $i$. Assuming $H_i(x)H_i(\tilde{x})>0$, setting the right hand side equal to $H_i(x)$ then dividing the above by $H_i(\tilde{x})$ and taking the logarithm yields $Kt = b$. If $K$ is nonsingular, the unique solution is $s=K^{-1}b$.
\end{proof}

One example is for functions $H_i$ that are homogeneous with respect to $\psi^{[i]}_t$, but invariant with respect to $\psi^{[j]}_t$ for $j\neq i$. This arises for functions that depend on separate variables, e.g., if $x=(x_1, x_2)\in\R^{n_1+n_2}$ for $x_1\in\R^n_1$, $x_2\in\R^n_2$ and one has two homogeneous invariants $H_1(x_1)$ and $H_2(x_2)$ with respect to $\psi_t^{[1]}:x \mapsto (e^{tA_1}x_1, x_2)$ and $\psi_t^{[2]}:x \mapsto (x_1, e^{tA_2}x_2)$, respectively, then the degree matrix $K$ can be made diagonal and both invariants can be simultaneously preserved. For non-diagonal invertible $K$, one requires that the invariants are homogeneous with respect to multiple commuting actions forming invertible degree matrix. If these conditions are not met, one can still apply alternating homogeneous projections as per definition \ref{def:alt proj} or resort to the pseudo-invariant-preserving methods discussed in the section \ref{sec:nonlinear homogeneous symmetries}.

It is more common for a system to possess several invariants $H_i\in C^{1}(U)$, for $i=1,\ldots,m>1$, each homogeneous with respect to its own flow $\psi^{H_i}_t$. When these actions are distinct and non-commuting, a single simultaneous projection is generally unavailable. In such cases, one can apply a sequence of single-invariant projections in a cyclic fashion to obtain approximate simultaneous preservation of all invariants over multiple steps.

\begin{definition}[Alternating homogeneous projection]\label{def:alt proj}
Let $\{H_i\}_{i=1}^m$ be functions that are homogeneous with respect to the flows $\psi^{H_i}_t$. Let $\Phi_h$ be a base one-step method and let $\widehat{\Phi}_h^{H_i}=\psi^{H_i}_{s_i}\!\circ\!\Phi_h$ denote the homogeneous projection of $\Phi_h$ associated with $H_i$ as per definition~\ref{def:hom proj method}. Then the alternating homogeneous projection method is the $m$-step composition
\begin{equation}
  \widehat{\Phi}_{mh}(x)
  =\widehat{\Phi}_h^{H_m}\circ\widehat{\Phi}_h^{H_{m-1}}\circ\cdots\circ\widehat{\Phi}_h^{H_1}(x),
\end{equation}
where each stage applies the projection corresponding to one invariant.
\end{definition}

That is, the base integrator is followed by sequential homogeneous projections, each restoring one invariant at a time.  
While a single step preserves only one $H_i$ exactly, all invariants are restored cyclically, each at least once every $m$ steps. Even tho we haven't proved it here, we observe bounded growth in the errors of all invariants over long time intervals in our numerical experiments. Alternating projection also comes with the advantage of being fast to implement, requiring only an additional cheap evaluation of one $\psi^{H_i}_t$ for each step. 

So far, the methods rely on the existence of a homogeneous symmetries $\psi_t$. In the following section we will show that a large class of functions have \textit{linear} homogeneous symmetries that can be computed in closed form with matrix exponentials. Then in the subsequent section we will show that almost all functions have a \textit{non-linear} homogeneous symmetry that can be efficiently approximated to any desired level of accuracy.

\section{Linear homogeneous symmetries}\label{sec:linear homogeneous symmetries}
The set of functions $H(x)$ and flows $\psi_t(x)$ that satisfy the homogeneity condition \eqref{hom cond} will now be discussed. Here, we consider the case where $\psi_t(x)$ is linear. The non-linear case is discussed in the next section. The first situation leverages a more common notion of homogeneity, where $\psi_t(x)$ is the flow of a linear vector field, i.e., $\frac{d}{dt}\psi_t(x)=A\psi_t(x)$ for some matrix $A\in\R^{n\times n}$. This situation is useful because we can achieve energy preservation exactly with one cheap operation, however such a flow only exists if $H(x)$ has a special, though rather general, structure. The second situation is more general and allows for non-linear group actions, where $\psi_t(x)$ is the flow of a non-linear vector field, i.e., $\frac{d}{dt}\psi_t(x)=g(\psi_t(x))$ for $g:\R^n\to\R^n$. This will be discussed in the next section.

Consider the case where $H(x)$ is homogeneous with respect to an affine transformation $$\psi_t(x)=e^{tA}x + b,$$ for some matrix $A\in\R^{n\times n}$ and $b\in\R^n$. The case $b=0$ corresponds to homogeneity with respect to the origin, and the $b\ne 0$ case corresponds to homogeneity about another point. The translation component $b$ can be removed by a change of variables, so without loss of generality we will set $b=0$ for simplicity. 

\subsection{Isotropic dilations.} The simplest case, where $\psi_t(x)=e^t x$, representing uniform scaling in the direction of $x$. The homogeneous condition is $H(\lambda x)=\lambda^k H(x)$, where we have set $\lambda = e^{t}$. Here, the infinitesimal condition reduces to $x^T\nabla H(x)=k H(x)$, which is the standard definition of Euler's equation \eqref{euler eq} for homogeneous functions. Examples of homogeneous functions under isotropic dilations include: 
\begin{enumerate}
\item Star shaped functions $H(x) = f(\|x\|)K(x/\|x\|)$ where $f$ is a homogeneous function.
\item Perspective projections $H(x) =  f(x_n) K(\frac{x_1}{x_n},..., \frac{x_{n-1}}{x_n})$.
\item Homogeneous polynomials such as pure quadratics $H(x) = x^T A x$. 
\item Rational homogeneous polynomials $H(x) = \frac{P(x)}{Q(x)}$ where $P,Q$ are homogeneous polynomials.
\end{enumerate}
\subsection{Weighted dilations.} A more general case where $\psi_t(x)=e^{tD}x$ for some diagonal matrix $D=\text{diag}(d_1,\ldots,d_n)$, where $d_i\in\R$. This represents different scaling rates in different directions according to the values of $d_i$, allowing for a notion of homogeneity for a broader class of functions. Some examples are:
\begin{enumerate}
\item Mechanical systems with homogeneous potentials $H(q,p) = \frac{1}{2} p^\top M^{-1} p + V(q)$ where $V(\lambda q) = \lambda^r V(q)$ for some $r\in\R$, admits the linear group action $\psi_t(q,p) = (e^tq, e^{\frac{rt}{2}}p)$ with $k=r$. 
\item Geodesic flow on a Riemannian manifold with $H(q,p) = \frac{1}{2} p^\top G(q)^{-1} p$ where $G(q)$ is a positive definite matrix admit the linear group action $\psi_t(q,p) = (q, e^{t}p)$ with $k=2$.
\item Weighted homogeneous polynomials. Let $x^\alpha = x_1^{\alpha_1} x_2^{\alpha_2} \cdots x_n^{\alpha_n}$ be a monomial with multi-index $\alpha = (\alpha_1,\ldots,\alpha_n)$. Then the polynomial $H(x) = \sum_{\alpha} c_\alpha x^\alpha$ is weighted-homogeneous of degree $k$ with respect to weights $w=(w_1,\ldots,w_n)$ if each monomial satisfies $\alpha\cdot w = k$. The corresponding linear group action is $\psi_t(x) = (e^{w_1 t} x_1, \ldots, e^{w_n t} x_n)$.
\end{enumerate}
For example $n$-body problems of the form 
\begin{equation}
    H(q,p) = \sum_{i=1}^n \frac{p_i^2}{2m_i} + \sum_{1\leq i<j\leq n} \frac{m_im_j}{\|q_i-q_j\|}
\end{equation}
admit the linear group action $\psi_t(q,p) = (e^{-2t}q, e^{t}p)$ with $k=2$. 

\subsection{Matrix actions.} 
A further case arises when $H$ is a function on matrices, $H:\R^{n\times n}\to\R$, and the group action acts by left--right multiplication or conjugation. For instance, $\psi_t(X) = e^{tA} X e^{tB}$, where $A,B \in \R^{n\times n}.$ The homogeneous condition is $H(e^{tA} X e^{tB}) = e^{k t} H(X)$ with infinitesimal form $\langle AX + X B ,\,\nabla_X H(X) \rangle_F \;=\; k\,H(X),$ where $\langle\cdot,\cdot\rangle_F$ denotes the Frobenius inner product. 

A canonical example is the determinant $H(X)=\det X$. One computes $ \det(e^{tA} X e^{tB}) = e^{(\tr A + \tr B)t}\,\det X,$ so $H$ is a homogeneous function of degree $k=\tr A+\tr B$ under this action. Thus $\det$ is a \emph{relative invariant} (or \emph{characteristic}) of the matrix group action $\psi_t$.

\subsection{Quadratic invariants.}
Quadratic forms are always homogeneous with respect to isotropic dilations. However, they can sometimes be homogeneous with respect to a broader class of linear actions. Let $H(x) = x^\top S x$ be quadratic with $S=S^\top \in \mathbb{R}^{n\times n}$. Under the linear action $\psi_t(x) = e^{tA}x$, solving the infinitesimal condition for $A$ reduces to solving a special case of Sylvester's equation $A^\top S + SA \;=\; \kappa\,S.$

\subsection{Rank-deficient linear actions.}
Let $A\in\R^{n\times n}$ and $w_i\in\R^{n}$ be such that $A^Tw_i=0$ for $i=1,\ldots,\ell$ and $\ell\ge 1$. In this case, the infinitesimal condition simplifies to $$(Ax)^T\nabla H(x)=k H(x),$$ which is a first order linear PDE for $H(x)$ and can be solved by the straightening theorem \cite{liveranidynamical}
$$H(x)=\exp\!\big(k\,s(x)\big)\,G\!\big(w_1^Tx,\dots,w_{\ell}^Tx\big),$$
for some $s(x)$ satisfying $\dot{s}=1$ and for some $G:\R^{\ell}\to\R$. Then $H:\R^n\to\R$ is homogeneous of degree $k$ with respect to $\psi_t(x)=e^{tA}x$.
\subsection{Homogeneous projection and extended phase space methods using auxiliary variables.}
Homogeneous projection methods can be combined with extended phase space methods that introduce auxiliary variables to reformulate complicated invariants into homogeneous ones. For example, consider a function $K(x)\in C^1(U)$. By introducing the auxiliary variable $y\in\R$ we can transform $K(x)$ into the homogeneous function $H(x,y) = y^k K(x/y)$ for some integer $k>0$. Then $H(x,y)$ is always isotropically homogeneous of degree $k$. This mirrors the idea of invariant energy quadratisation and scalar auxiliary variable methods \cite{shen2018scalar, zhang2020novel}, but here we can achieve exact invariant-preservation in the extended phase space using homogeneous projection methods.

\section{Non-linear homogeneous symmetries}\label{sec:nonlinear homogeneous symmetries}
If $H(x)$ doesn't admit any homogeneous symmetries with respect to a linear group action, we can instead look for non-linear actions. We consider two cases, the first being conjugate homogeneity, where a non-homogeneous function can be transformed into a homogeneous one via a diffeomorphism. This can lead to closed form non-linear group actions $\psi_h$. The second case is the most general, where we look for a non-linear group action that satisfies the homogeneity condition directly, by defining it through its infinitesimal generator. As we will see, there always exists such a $g(x)$ for any set of functions $H_i(x)$, and therefore $\psi_t$ exists and can be approximated accurately by any numerical method.

\subsection{Invariants that are conjugate to homogeneous}
We now introduce a broader class of homogeneous functions by allowing conjugation with a diffeomorphism. 

\begin{proposition}[Conjugate homogeneity]\label{prop:conj hom}
Let $\phi:\R^n\to\R^n$ be a diffeomorphism and let $G$ be homogeneous of degree $k$ with respect to $e^{tA}$ for $A\in\R^{n\times n}$, then $H(x) = G(\phi(x))$ is homogeneous of degree $k$ with respect to  
$$ 
    \psi_t(x) = \phi^{-1}\!\circ e^{At}\circ\phi(x). 
$$
\end{proposition}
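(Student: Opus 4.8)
The plan is to verify the homogeneity condition \eqref{hom cond} for $H(x) = G(\phi(x))$ with respect to $\psi_t(x) = \phi^{-1}\circ e^{At}\circ\phi(x)$ by a direct substitution, using only the fact that $G$ is homogeneous of degree $k$ with respect to $e^{tA}$ and that $\phi$ is a diffeomorphism (so $\phi^{-1}$ exists and $\phi\circ\phi^{-1}=\mathrm{id}$).

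First I would compute $H(\psi_t(x))$ by unwinding the definitions: $H(\psi_t(x)) = G\bigl(\phi(\psi_t(x))\bigr) = G\bigl(\phi\circ\phi^{-1}\circ e^{At}\circ\phi(x)\bigr) = G\bigl(e^{At}\phi(x)\bigr)$, where the middle step uses $\phi\circ\phi^{-1}=\mathrm{id}$. Then, applying the hypothesis that $G$ is homogeneous of degree $k$ with respect to $e^{tA}$ — i.e. $G(e^{At}y) = e^{tk}G(y)$ for all $y$ — with $y=\phi(x)$, I get $G\bigl(e^{At}\phi(x)\bigr) = e^{tk}G(\phi(x)) = e^{tk}H(x)$. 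Chaining these gives $H(\psi_t(x)) = e^{tk}H(x)$, which is exactly \eqref{hom cond}, so $H$ is homogeneous of degree $k$ with respect to $\psi_t$.

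For completeness I would also check that $\psi_t$ is genuinely a flow, so that it qualifies as the object appearing in Definition 3.4 (Homogeneous function): the group property $\psi_s\circ\psi_t = \phi^{-1}\circ e^{As}\circ\phi\circ\phi^{-1}\circ e^{At}\circ\phi = \phi^{-1}\circ e^{A(s+t)}\circ\phi = \psi_{s+t}$ follows from the same cancellation together with $e^{As}e^{At}=e^{A(s+t)}$, and $\psi_0 = \phi^{-1}\circ I\circ\phi = \mathrm{id}$. Its infinitesimal generator is $g(x) = \frac{d}{dt}\big|_{t=0}\psi_t(x) = D\phi(x)^{-1}A\phi(x)$ by the chain rule, which is smooth since $\phi$ is a diffeomorphism; one could optionally remark that this $g$ satisfies Euler's equation \eqref{euler eq} as a consistency check, though it is implied by the already-established finite condition.

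There is essentially no obstacle here — the argument is a one-line conjugation identity — so the only thing to be careful about is bookkeeping: ensuring $\phi$ and $\phi^{-1}$ are composed in the correct order and that the cancellation $\phi\circ\phi^{-1}=\mathrm{id}$ (rather than $\phi^{-1}\circ\phi$) is the one actually used. If one wanted a more regularity-conscious statement one might track the smoothness class of $\psi_t$ (it inherits $C^{q+1}$ from $G$'s flow and the smoothness of $\phi$), but this is not needed for the stated conclusion.
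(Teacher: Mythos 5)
Your proof is correct: the paper states Proposition~\ref{prop:conj hom} without any proof, and your direct computation $H(\psi_t(x)) = G(\phi\circ\phi^{-1}\circ e^{At}\circ\phi(x)) = G(e^{At}\phi(x)) = e^{tk}G(\phi(x)) = e^{tk}H(x)$ is exactly the one-line conjugation argument the statement evidently intends. The extra checks (group property of $\psi_t$ and the generator $g(x)=D\phi(x)^{-1}A\phi(x)$) are a welcome addition, since Definition of homogeneity in the paper is phrased in terms of a flow and the paper never verifies that the conjugated map is one.
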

Thus conjugate homogeneity provides a systematic way to generate large families of nonlinear homogeneous functions by composing simpler homogeneous forms with expressive diffeomorphisms with closed form inverse. For example, many splitting methods have closed form inverses such as shears, symplectic splittings \cite{mclachlan2004explicit} or the following. 

\begin{definition}[Triangular diffeomorphism] 
Let $x_{<i} = (x_1,...,x_{i-1})$, then a diffeomorphism $\phi:\R^n\to\R^n$ is called \emph{triangular} if
$$\phi_i(x_1,\dots,x_n) = a_i(x_{<i})h_i(x_i) + b_i(x_{<i}), \qquad i=1,\dots,n.$$
It's inverse is given by backsubstitutions of the form 
$$x_i = h_i^{-1}\!\left(\frac{\phi_i - b_i(x_{<i})}{a_i(x_{<i})}\right), \qquad i=1,\dots,n.$$
\end{definition}
Such maps are always have closed form inverse provided that $h_i^{-1}$ is available in closed form. The inverse $\phi^{-1}$ can be computed recursively by solving the equations $\phi_i(x_1,\dots,x_i)=y_i$ for $x_i$. Triangular diffeomorphisms therefore provide a simple yet expressive family of nonlinear invertible transformations that preserve computational tractability. Any finite composition $\phi=\phi_m\circ\dots\circ\phi_1$ of such building blocks yields a diffeomorphism with explicit inverse, thereby producing large families of conjugate homogeneous functions. 

This can be especially useful when working with invariants of the mechanical form $H = p^TM(q)p + V(q)$, where there always exists a triangular diffeomorpshism that transforms the kinetic term into a homogeneous function. 

\begin{proposition}
  Let  $H(q, p) = \tfrac{1}{2}p^TM(q)p + V(q)$, where $V(q)$ be homogeneous of degree $k$ with respect to $\psi^V_t$ and $M(q)$ be a positive definite matrix with Cholesky decomposition $M(q)=L(q)L(q)^T$ for lower triangular matrix $L(q)$. Denote by $\phi(q,p) = (q, v)$, a triangular diffeomorphism with $v=L(q)p$. Then $H\circ\phi$ is homogeneous of degree $k$ with respect to $\psi_t(q, v) = (\psi^{V}_t (q), e^{\tfrac{1}{2}kt}v)$.
\end{proposition}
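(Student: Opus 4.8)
The plan is to pass to the Cholesky-straightened momentum coordinates, observe that in those coordinates $H$ becomes the sum of an isotropic quadratic form in $v$ and the already-homogeneous potential $V(q)$, and then conclude by conjugate homogeneity. Concretely, set $\phi(q,p)=(q,v)$ with $v$ chosen so that $p^\top M(q)p=\|v\|^2$; with the standard convention $M(q)=L(q)L(q)^\top$ this means $v=L(q)^\top p$, and one reorders the momentum components (or switches to the ``upper'' Cholesky and $v=L(q)p$) so that $\phi$ literally matches the triangular template of the Triangular-diffeomorphism definition — the $q$-block is the identity and the $v$-block is affine in each $p_i$ with coefficient $L_{ii}(q)>0$. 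Positive definiteness of $M(q)$ gives $L_{ii}(q)>0$, hence $L(q)$ is invertible at every $q$, so $\phi$ is a bona fide diffeomorphism whose inverse $p=L(q)^{-\top}v$ is computed by a triangular solve. This disposes of the well-definedness.

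Next I would compute $H$ in the new coordinates. Writing $\widetilde H:=H\circ\phi^{-1}$ for the Hamiltonian expressed in $(q,v)$, the Cholesky identity gives
\[
  \widetilde H(q,v)=\tfrac12\|v\|^2+V(q).
\]
The candidate flow $\psi_t(q,v)=(\psi^V_t(q),\,e^{\frac12 k t}v)$ is genuinely a flow, being the product of the flow $\psi^V_t$ on the $q$-block with the linear dilation flow $v\mapsto e^{\frac12 k t}v$ on the $v$-block. Then, using degree-$k$ homogeneity of $V$ with respect to $\psi^V_t$, i.e.\ $V(\psi^V_t(q))=e^{kt}V(q)$ from \eqref{hom cond}, together with the quadratic scaling of $\|v\|^2$,
\[
  \widetilde H(\psi_t(q,v))=\tfrac12\|e^{\frac12 k t}v\|^2+V(\psi^V_t(q))=\tfrac12 e^{kt}\|v\|^2+e^{kt}V(q)=e^{kt}\,\widetilde H(q,v),
\]
so $\widetilde H$ is homogeneous of degree $k$ with respect to $\psi_t$, which is the assertion (identifying $\phi$ with the coordinate change as in the statement).

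Equivalently, this is an instance of Proposition~\ref{prop:conj hom}: its argument only uses that the base action scales $G$ by $e^{kt}$ and never uses linearity of that action, so it applies verbatim with $\psi_t$ in place of $e^{tA}$, giving that $H=\widetilde H\circ\phi$ is homogeneous of degree $k$ with respect to $\phi^{-1}\circ\psi_t\circ\phi$ (conjugation of a flow by $\phi$ is again a flow); when $\psi^V_t$ is itself linear, e.g.\ a weighted dilation as for the homogeneous potentials of Section~\ref{sec:linear homogeneous symmetries}, Proposition~\ref{prop:conj hom} applies directly. I do not expect a genuine obstacle here: the entire content is the two displayed identities. The only points requiring care are bookkeeping — fixing the transpose/ordering convention in the Cholesky factorization so that the kinetic term becomes the isotropic $\|v\|^2$ and $\phi$ is triangular, and noting the (trivial) extension of Proposition~\ref{prop:conj hom} to a base action that is a product of a nonlinear flow and a linear dilation. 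What makes the statement work is that both summands scale by the same factor $e^{kt}$, which is exactly why the dilation rate on $v$ must be $k/2$ and why $V$ needs to be homogeneous of a single degree.
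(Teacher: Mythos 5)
Your proof is correct. The paper itself gives no proof of this proposition --- it is stated bare and followed immediately by the worked two--degree--of--freedom example (Example~\ref{eg:2d mechanical triangular map}) --- and your two displayed identities are precisely the argument the paper leaves implicit: pass to $v$ so that the kinetic term becomes $\tfrac12\|v\|^2$, then observe both summands scale by $e^{kt}$ under $\psi_t$. You also correctly flag (and resolve) the one genuine wrinkle, namely that the stated convention $M=LL^\top$ with $v=L(q)p$ gives $\|v\|^2=p^\top L^\top L\,p$, so the isotropic kinetic form actually requires $v=L(q)^\top p$ (or the convention $M=L^\top L$, which is what the paper's $2\times2$ example silently uses); likewise your reading of ``$H\circ\phi$'' as $H$ expressed in the straightened coordinates, i.e.\ $H\circ\phi^{-1}$, matches the paper's usage $H(x)=G(\phi(x))$ elsewhere. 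The observation that Proposition~\ref{prop:conj hom} extends verbatim to a nonlinear base action is also right and is exactly what the proposition needs when $\psi^V_t$ is not linear.
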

So by the above proposition, if $V(q)$ is already homogeneous, or can be made homogeneous using another triangular diffeomorphism, then we can always find energy preserving maps. This is equivalent to finding a new set of conjugate momenta where the mass matrix is diagonal. The following example summarizes the two degree of freedom case.
 
\begin{example}[Mechanical system with two degrees of freedom]\label{eg:2d mechanical triangular map} Let $q, p\in\R^2$ and 
  $$M(q)=\begin{pmatrix}a(q)&b(q)\\[2pt]b(q)&c(q)\end{pmatrix},\qquad
\Delta(q)=a(q)c(q)-b(q)^2 > 0.$$
with the Cholesky factorisation
$$M(q)=L(q) L(q)^T,\qquad
L(q)=\begin{pmatrix}
\lambda_{11}(q) & 0\\[4pt]
\lambda_{21}(q) & \lambda_{22}(q)
\end{pmatrix},$$
and 
$$\lambda_{22}(q)=\sqrt{c(q)},\quad
\lambda_{21}(q)=\frac{b(q)}{\sqrt{c(q)}},\quad
\lambda_{11}(q)=\sqrt{a(q)-\frac{b(q)^2}{c(q)}}=\sqrt{\frac{\Delta(q)}{c(q)}}.$$
Define the triangular diffeomorphism $\phi:(q,p)\mapsto (q, y)$, where $y=L(q)p$, i.e., 
$$y_1 = \lambda_{11}(q)p_1,\quad y_2 = \lambda_{21}(q)p_1+\lambda_{22}(q)p_2$$

Then, identically, $p^{T}M(q)p = \|L(q)p\|^2 = y_1^2+y_2^2$. The inverse is given by
$$p_1=\frac{y_1}{\lambda_{11}(q)},\qquad
p_2=\frac{y_2-\lambda_{21}(q)p_1}{\lambda_{22}(q)}.
$$
\end{example}

We will demonstrate this method on the double pendulum in the numerical experiments section.

\subsection{Explicit pseudo-invariant-preserving homogeneous projection methods}
Here we present the most general case, where $f(x)$ has multiple invariants $H_i$ for $i=1,...,m$ with no known linear homogeneous symmetries. Let $\psi_t$ be the flow of the vector field $g(x)\in C^{q+1}(U)$, i.e., $\frac{d}{dt}\psi_t(x)=g(\psi_t(x))$. The infinitesimal condition becomes $g(x)^T\nabla H_i(x)=k_i H_i(x)$, for each $i$. Let $G(x) = (\nabla H_1(x), \ldots, \nabla H_m(x))\in\R^{n\times m}$ be the gradient matrix, and $V(x) = (v_1(x), \ldots, v_m(x))\in\R^{n\times m}$ be matrix, then a solution on compact sets where $\det{G^TV}\ne 0$ is 
\begin{equation}\label{eq:non lin generator}
g(x)=V(G^TV)^{-1}K\mathbf{1},
\end{equation}
where $K = \diag(k_1 H_1, \ldots, k_m H_m)$ is a diagonal matrix in $\R^{m\times m}$. The choice $V = G$ yields the minimum-norm vector field satisfying the infinitesimal condition, also known as orthogonal projection, which is usually a safe choice. Otherwise Tikhonov regularization of $G^TV$ can be applied for poorly conditioned systems. As the flow of \eqref{eq:non lin generator} is generally not available in closed form, we approximate it with a one-step method $\Psi_t$ of order $q$. Using this approximation in the homogeneous projection method yields the following method.
 
\begin{theorem}[Pseudo-invariant-preservation]\label{thm:pseudo-energy}
  Let $\Phi_h$ be an order-$p$ one-step method for $\dot x=f(x)$ with exact flow $\varphi_h$ with $m$ invariants $f^T\nabla H_i=0$, such that $H_i(x)H_i(\Phi_h(x))>0$ and each $H_i$ are Lipschitz continuous. Furthermore, let $\Psi_h$ be an order-$q$ one-step method for $\dot z=g(z)$, as defined. Then the method
  $$ \widehat{\Phi}_h(x)\;:=\;\Psi_{1}\circ\Phi_h(x), $$
  where $k_i \;=\; \log\!\left(\frac{H_i(x)}{H_i(\Phi_h(x))}\right)$ for $i=1,...,m$ satisfies the following:
\begin{enumerate}
  \item $|H_i(\widehat{\Phi}_h(x)) - H_i(x)| = \mathcal{O}\big(h^{(p+1)(q+1)}\big)$ for each $i$ (pseudo-invariant-preserving), and 
  \item  $|\widehat{\Phi}_h(x) - \varphi_h(x)\| = \mathcal{O}(h^{p+1})$ (same order as $\Phi_h$).
\end{enumerate}

\end{theorem}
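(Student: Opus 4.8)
The plan is to exploit the smallness of the auxiliary field $g$. Since each $H_i$ is an exact invariant of $\varphi_h$, the quantities $H_i(\Phi_h(x))-H_i(x)=H_i(\Phi_h(x))-H_i(\varphi_h(x))$ are $\mathcal{O}(h^{p+1})$ by Lipschitz continuity of $H_i$ and order-$p$ consistency of $\Phi_h$, whence $k_i=\log\!\bigl(H_i(x)/H_i(\Phi_h(x))\bigr)=\log\!\bigl(1+\mathcal{O}(h^{p+1})\bigr)=\mathcal{O}(h^{p+1})$. Reading off \eqref{eq:non lin generator} and writing $K\mathbf{1}=\diag(k_1,\dots,k_m)\,(H_1(z),\dots,H_m(z))^\top$, the factor $V(G^\top V)^{-1}$ and the vector $(H_1,\dots,H_m)^\top$ are smooth and independent of $h$ on the compact set where $\det(G^\top V)$ is bounded away from zero (which contains $\Phi_h(x)$ for $h$ small, since $\Phi_h(x)\to x$). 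Consequently $g$ together with its $z$-derivatives up to order $q+1$ are all $\mathcal{O}(h^{p+1})$, uniformly; write $g=\epsilon\,\tilde g$ with $\epsilon:=\max_i|k_i|=\mathcal{O}(h^{p+1})$ and $\|\tilde g\|_{C^{q+1}(U)}=\mathcal{O}(1)$.

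Next I would record two consequences. By the direct computation $\nabla H_i(z)^\top g(z)=\bigl(G^\top V(G^\top V)^{-1}K\mathbf{1}\bigr)_i=k_iH_i(z)$, each $H_i$ satisfies Euler's equation \eqref{euler eq} with degree $k_i$ relative to $\psi_t$, so $H_i(\psi_t(z))=e^{k_it}H_i(z)$; in particular, with $\tilde x:=\Phi_h(x)$, $H_i(\psi_1(\tilde x))=e^{k_i}H_i(\Phi_h(x))=H_i(x)$, i.e.\ the exact time-$1$ flow of $g$ restores every invariant exactly. Also, from $\psi_1(\tilde x)-\tilde x=\int_0^1 g(\psi_s(\tilde x))\,\rd s$ we get $\|\psi_1(\tilde x)-\tilde x\|\le\|g\|_{\infty,U}=\mathcal{O}(h^{p+1})$ (the short trajectory stays in $U$ for $h$ small).

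Now I would estimate how well $\Psi_1$ approximates $\psi_1$. Using the scaling identity $\Psi_1^{[\,g\,]}=\Psi_\epsilon^{[\,\tilde g\,]}$ valid for one-step methods (e.g.\ Runge--Kutta, where rescaling the vector field by $\epsilon$ and the step by $1/\epsilon$ leaves the stages invariant), the order-$q$ local-error bound applied to the fixed, $h$-uniformly $C^{q+1}$-bounded field $\tilde g$ gives $\|\Psi_\epsilon^{[\tilde g]}(\tilde x)-\varphi^{\tilde g}_\epsilon(\tilde x)\|\le C\,\epsilon^{q+1}$ with $C$ independent of $h$. Since $\varphi^{\tilde g}_\epsilon=\varphi^{g}_1=\psi_1$, this yields $\|\Psi_1(\tilde x)-\psi_1(\tilde x)\|=\mathcal{O}(\epsilon^{q+1})=\mathcal{O}(h^{(p+1)(q+1)})$. (Equivalently one argues via B-series: the local error is $\sum_{|\tau|=q+1}c_\tau F(\tau)(g)+\text{h.o.t.}$, and each elementary differential $F(\tau)(g)$ with $|\tau|=q+1$ is a sum of products of $q+1$ factors, each $g$ or a derivative of $g$, hence $\mathcal{O}(\|g\|_{C^q}^{q+1})$.) Claim (1) then follows from Lipschitz continuity of $H_i$: $H_i(\widehat\Phi_h(x))=H_i(\Psi_1(\tilde x))=H_i(\psi_1(\tilde x))+\mathcal{O}(\|\Psi_1(\tilde x)-\psi_1(\tilde x)\|)=H_i(x)+\mathcal{O}(h^{(p+1)(q+1)})$. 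For claim (2), split $\widehat\Phi_h(x)-\varphi_h(x)=\bigl(\Psi_1(\tilde x)-\psi_1(\tilde x)\bigr)+\bigl(\psi_1(\tilde x)-\tilde x\bigr)+\bigl(\Phi_h(x)-\varphi_h(x)\bigr)$, which is $\mathcal{O}(h^{(p+1)(q+1)})+\mathcal{O}(h^{p+1})+\mathcal{O}(h^{p+1})=\mathcal{O}(h^{p+1})$ since $(p+1)(q+1)\ge p+1$.

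The main obstacle is the third step: showing that a single unit-length step of the order-$q$ method applied to the $h$-dependent field $g$ incurs only an $\mathcal{O}(\epsilon^{q+1})$ error with a constant that does not deteriorate as $h\to0$. This rests on the scaling/homogeneity property of the integrator combined with uniform $C^{q+1}$-bounds on $\tilde g$ over the compact set; the remaining ingredients — the estimates $k_i=\mathcal{O}(h^{p+1})$, Euler's equation for $g$, and the triangle-inequality splittings — are routine, as are the technical points that $\Phi_h(x)$ and the short $\psi$-trajectory remain in the region where $\det(G^\top V)\ne0$ and $H_i\ne0$ for $h$ small enough.
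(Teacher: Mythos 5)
Your proposal is correct and follows essentially the same route as the paper: the same estimate $k_i=\mathcal{O}(h^{p+1})$, the same observation that the exact flow $\psi_1$ of $g$ restores every $H_i$ exactly via Euler's equation, the same Lipschitz transfer to the invariants, and the same three-term triangle inequality for the state error. The one place you go beyond the paper is the key bound $\|\Psi_1(\tilde x)-\psi_1(\tilde x)\|=\mathcal{O}(|k|^{q+1})$, which the paper asserts directly from ``$\|g\|_{C^q}=\mathcal{O}(|k|)$'' while you justify it rigorously via the rescaling identity $\Psi_1^{[g]}=\Psi_\epsilon^{[\tilde g]}$ with $g=\epsilon\tilde g$ and an $h$-uniform $C^{q+1}$ bound on $\tilde g$ --- a welcome tightening of the argument rather than a different proof.
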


\begin{proof} 

Define for each $i$ the energy error $\delta_{i,h}:=H_i(\Phi_h(x))-H_i(x)$, which satisfies
$$ |\delta_{i,h}|\le L_{H_i}\,\|\Phi_h(x)-\varphi_h(x)\|=\mathcal O(h^{p+1}).$$
Letting $g$ be defined as above gives $g(z)^\top\nabla H_i(z)=k_i H_i(z)$, by construction, hence along $\psi_t$ we have $H_i(\psi_t(z))=e^{k_i t}H_i(z)$. Evaluating at $z=\Phi_h(x)$, $t=1$ and $k_i \;=\; \log\!\left(\frac{H_i(x)}{H_i(\Phi_h(x))}\right)$ gives 
\begin{equation}\label{eq:pseudo energy exact}
  H_i\big(\psi_1(\Phi_h(x))\big)=e^{k_i}H_i(\Phi_h(x))=H_i(x).
\end{equation}
The order of $\Psi_h$ implies that
$$
\|\Psi_1(z)-\psi_1(z)\|\;=\;\mathcal O\!\big(\|g\|_{C^q}^{\,q+1}\big).
$$
Since $g_i$ is bounded and depends linearly on $k_i$, we have $\|g\|_{C^q}=\mathcal O(|k|)$, hence
$$
\|\Psi_1(z)-\psi_1(z)\|=\mathcal O(|k|^{\,q+1}).
$$
Evaluating this at $z=\Phi_h(x)$ and applying the mean value inequality for $H_i$ gives
$$
\big|H_i(\Psi_1(\Phi_h(x)))-H_i(\psi_1(\Phi_h(x)))\big|
\le L_{H_i}\,\|\Psi_1(\Phi_h(x))-\psi_1(\Phi_h(x))\|
=\mathcal O(|k|^{\,q+1}).
$$
Combining with \eqref{eq:pseudo energy exact} and $|k|=\mathcal O(h^{p+1})$ yields, for each $i$,
$$
\big|H_i(\Psi_1(\Phi_h(x)))-H_i(x)\big|
=\mathcal O\!\big(h^{(p+1)(q+1)}\big),
$$
which proves item 1. For the state error, use the triangle inequality
$$
\|\Psi_1(\Phi_h(x))-\varphi_h(x)\|
\le \|\Phi_h(x)-\varphi_h(x)\|+\|\psi_1(\Phi_h(x))-\Phi_h(x)\|+\|\Psi_1(\Phi_h(x))-\psi_1(\Phi_h(x))\|.
$$
The first term is $\mathcal O(h^{p+1})$ by the order of $\Phi_h$. For the second term,
$$
\|\psi_1(\Phi_h(x))-\Phi_h(x)\|
\le \int_0^1 \|g(\psi_t(\Phi_h(x)))\|\,dt \le C\,|k|=\mathcal O(h^{p+1}),
$$
for some constant $C$. The final term is $\mathcal O(|k|^{\,q+1})=\mathcal O(h^{(p+1)(q+1)})$. Therefore
$$
\|\Psi_1(\Phi_h(x))-\varphi_h(x)\|=\mathcal O(h^{p+1}),
$$
which proves item 2.
\end{proof}

This method can be seen as a generalization of the linear homogeneous projection method to the non-linear case, where the linear group action is replaced by a non-linear one defined through its infinitesimal generator. The method is explicit and easy to implement, requiring only the evaluation of the invariants and their gradients. The choice of $V$ in \eqref{eq:non lin generator} can be used to control properties of the generator $g$. Furthermore, the method can be iterated multiple times to achieve superlinear convergence to invariant preservation.

\begin{corollary}[superlinear iterated convergence]\label{cor:superlinear}
Let $\widehat\Phi_h=(\Psi_1)^r\circ\Phi_h$ denote the pseudo invariant-preserving method with $r$ iterations of the $\Psi_1$, where $k_i$ are updated each iteration. Then
$$
\big|H_i(\widehat\Phi_h)-H_i(x)\big|= O\big(h^{(p+1)(q+1)^r}\big),
$$
for $i=1,...,m$. 
\end{corollary}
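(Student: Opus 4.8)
The plan is to prove this by induction on the iteration count $r$, using Theorem \ref{thm:pseudo-energy} as the base case and as the engine of the inductive step. First I would set up the induction: for $r=1$, the method $\widehat\Phi_h = \Psi_1\circ\Phi_h$ is exactly the pseudo-invariant-preserving method of Theorem \ref{thm:pseudo-energy}, which gives $|H_i(\widehat\Phi_h(x)) - H_i(x)| = \mathcal O(h^{(p+1)(q+1)})$, matching the claimed bound with $r=1$. For the inductive step, suppose that after $r-1$ iterations the state $x_{r-1} := (\Psi_1)^{r-1}\circ\Phi_h(x)$ satisfies the invariant error bound $\varepsilon_{i,r-1} := |H_i(x_{r-1}) - H_i(x)| = \mathcal O(h^{(p+1)(q+1)^{r-1}})$ for each $i$, while still $\|x_{r-1} - \varphi_h(x)\| = \mathcal O(h^{p+1})$ (the state order is retained at every iteration, again by the triangle-inequality argument in Theorem \ref{thm:pseudo-energy}, since each correction is of size $\mathcal O(|k^{(j)}|)$ which only shrinks).

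The key observation is that the $r$-th application of $\Psi_1$ plays precisely the role of a single pseudo-projection, but now applied to a "base step" $x_{r-1}$ whose invariant defect relative to the target $H_i(x)$ is $\varepsilon_{i,r-1}$ rather than $\mathcal O(h^{p+1})$. Concretely, at iteration $r$ the parameters are recomputed as $k_i^{(r)} = \log\!\big(H_i(x)/H_i(x_{r-1})\big)$, so writing $\delta_{i} := H_i(x_{r-1}) - H_i(x)$ we get $k_i^{(r)} = -\log(1 + \delta_i/H_i(x)) = \mathcal O(|\delta_i|) = \mathcal O(\varepsilon_{i,r-1})$ by the same $\log(1+u) = u + \mathcal O(u^2)$ truncation used before (here $H_i(x)$ is bounded away from zero on the compact set, and for $h$ small enough $H_i(x)H_i(x_{r-1})>0$ holds since the defect is small). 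The exact flow $\psi_1$ of the generator \eqref{eq:non lin generator} built from these $k_i^{(r)}$ would restore $H_i$ exactly: $H_i(\psi_1(x_{r-1})) = e^{k_i^{(r)}} H_i(x_{r-1}) = H_i(x)$. The only error is the discretization $\|\Psi_1(x_{r-1}) - \psi_1(x_{r-1})\| = \mathcal O(\|g\|_{C^q}^{q+1}) = \mathcal O(|k^{(r)}|^{q+1}) = \mathcal O(\varepsilon_{i,r-1}^{q+1})$, since $g$ depends linearly on the $k_i^{(r)}$. Applying the mean value inequality for $H_i$ gives
\begin{equation*}
|H_i(\widehat\Phi_h(x)) - H_i(x)| = |H_i(\Psi_1(x_{r-1})) - H_i(\psi_1(x_{r-1}))| = \mathcal O\big(\varepsilon_{i,r-1}^{q+1}\big) = \mathcal O\big(h^{(p+1)(q+1)^{r-1}(q+1)}\big) = \mathcal O\big(h^{(p+1)(q+1)^{r}}\big),
\end{equation*}
which closes the induction. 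One should also verify the state error is retained: $\|\widehat\Phi_h(x) - \varphi_h(x)\| \le \|x_{r-1} - \varphi_h(x)\| + \|\psi_1(x_{r-1}) - x_{r-1}\| + \|\Psi_1(x_{r-1}) - \psi_1(x_{r-1})\|$, where the first term is $\mathcal O(h^{p+1})$ by hypothesis and the latter two are $\mathcal O(|k^{(r)}|) = \mathcal O(h^{(p+1)(q+1)^{r-1}})$ and smaller, so the sum is $\mathcal O(h^{p+1})$.

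The main obstacle I anticipate is bookkeeping the uniformity of all the implied constants across iterations on a fixed compact set — in particular, ensuring that all iterates $x_j$ for $j=1,\dots,r$ remain in a compact neighborhood of the true flow (so that the bounds $\|g\|\le C_g$, the Lipschitz constants $L_{H_i}$, and the lower bound $|H_i|\ge c>0$ all hold with $h$-independent constants), and that the sign condition $H_i(x)H_i(x_j)>0$ persists. Since each correction step moves the state by only $\mathcal O(h^{p+1})$ and the invariant defects are shrinking superlinearly, this is true for $h$ sufficiently small and $r$ fixed; making this precise just requires choosing $h_0$ small enough depending on $r$ and then noting all the estimates compose. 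A secondary subtlety is that at iteration $r$ the generator $g$ is rebuilt from the current $k_i^{(r)}$ via \eqref{eq:non lin generator}, so its $C^q$ norm genuinely scales like $|k^{(r)}|$ — this is the crucial structural fact (linearity of $g$ in $K\mathbf 1$) that makes the exponent improve multiplicatively rather than merely additively, and it is worth stating explicitly as the heart of why superlinear convergence occurs.
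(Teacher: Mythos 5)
Your proof is correct and follows essentially the same route as the paper: an induction on $r$ whose step is a re-application of the mechanism of Theorem \ref{thm:pseudo-energy}, with the exponent improving multiplicatively because the regenerated $k_i^{(r)}$ (and hence $\|g\|_{C^q}$) scale with the current defect $\varepsilon_{i,r-1}$. The paper's own proof states this induction in one sentence without detail, so your version simply makes explicit the step the paper leaves implicit (including the uniformity-of-constants caveat, which the paper does not address).
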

\begin{proof}  
  After $r$ iterations, define the invariant error as $ \delta^{(r)}_{i,h}=H_i(\left(\Psi_1\right)^r\circ\Phi_h(x))-H(\varphi_h(x))$. Theorem \ref{thm:pseudo-energy} gives $|\delta^{(1)}_{i,h}|=\mathcal O(h^{(p+1)(q+1)})$ when the $r=0$ (base step invariant error) is $|\delta_{i,h}|=|\delta^{(0)}_{i,h}|=\mathcal O(h^{(p+1)})$. The result for $|\delta^{(r)}_{i,h}|$ for $r>1$ then follows by induction, by noting that each iteration of $\Psi_1$ increases the order of invariant preservation from $(p+1)(q+1)^{r-1}$ to $(p+1)(q+1)^r$.
\end{proof} 

The convergence bound $O\big(h^{(p+1)(q+1)^r}\big)$ is verified numerically in appendix \ref{app:energy convergence}. 
  
\section{Numerical experiments: ODEs with invariants}\label{sec:numerical experiments ODEs}

\subsection{Invariant-preserving error controlled methods using \texttt{scipy} implementation.} 
Adaptive methods are widely used in practice, but it is non-trivial to combine adaptivity with structure-preservation. However, projection methods like ours are well suited to open source implementations with minimal modifications. To this end, for the base step $\Phi_h(x)$, we consider popular methods such as an explicit Runge-Kutta method of order 5(4) using the Dormand-Prince pair formulas \cite{dormand1980family}. The error is controlled assuming accuracy of the fourth-order method accuracy, but steps are taken using the fifth-order accurate formula. This is usually referred to as the RK45 method. We also consider the Dormand-Prince embedded 8(5,3) method (DOP853) \cite{hairer1987solving}, which is a popular explicit Runge–Kutta method with embedded error estimates. The logic of the adaptive methods remain unchanged. We only apply a projection step after the step is accepted. Our implementation follows the \texttt{scipy.integrate.solve\_ivp} interface \cite{2020SciPy-NMeth}, the only difference being a thin wrapper on the step function to apply the projection after each accepted step. The code is available at the authors github repository\footnote{github.com/bentaps/homproj} or via \texttt{pip install homproj}.

\subsection{Double pendulums}
The purpose of this experiment is to compare the accuracy of energy-preserving methods with symplectic methods. We consider the double pendulum systems because they have one preserved invariant (the Hamiltonian) and a symplectic structure. As the Hamiltonian is non-separable, there do not exist fast symplectic methods to solve these systems. In such situations, whether preserving energy or symplectic structure is best is not clear. Here, we will demonstrate that adaptive methods are advantageous over fixed step size methods. 

The double pendulum system which describes the motion of two masses connected by a rod of fixed length. Let $x=(q_1, q_2, p_1, p_2)$, then the Hamiltonian is given by 
\[
H(x)=
\frac{p_{1}^{2}+2p_{2}^{2}-2\cos(q_{1}-q_{2})\,p_{1}p_{2}}
{2\left(2-\cos^{2}(q_{1}-q_{2})\right)}
+V(q_{1}, q_{2}).
\]

Following example \ref{eg:2d mechanical triangular map}, we consider the triangular map $\phi:\R^4\to\R^4$ given by
\begin{equation}\label{eq:dp diffeo}
  \phi(x)=y=\begin{pmatrix}
h_1(q_1)\\[2pt]
h_2(q_2)\\[2pt]
\tfrac{1}{2}\,p_{1}\\[6pt]
\dfrac{2p_{2}-\cos(q_{1}-q_{2})\,p_{1}}{2\sqrt{2-\cos^{2}(q_{1}-q_{2})}}
\end{pmatrix},
\end{equation}
for some invertible functions $h_1, h_2$ described below. Two double pendulum potentials are considered here:
\begin{enumerate}
  \item A torsioned joint potential: $V_1(q_1,q_2)=-\frac{1}{2}(q_1^2+q_2^2)$. This is a double pendulum where the joints are acted upon by torsion springs, i.e., $F_i=-k_i q_i$ for some spring constants $k_i$. For $h_1(q_1)=q_1$, $h_2(q_2)=q_2$, we have $H(x)=G_2(\phi(x))$ where
  $$G_2(y)=\frac{1}{2}(y_1^2+y_2^2)+y_3^2+y_4^2,$$
  is homogeneous of degree $1$ with respect to the isotropic scaling.
  \item A gravitational potential: $V_2(q_1,q_2)=-2\cos(q_1)-\cos(q_2)$. This is the usual double pendulum where the masses are acted upon by gravity. Then for $h_1(q_1)=-2\cos(q_1)$, $h_2(q_2)=-\cos(q_2)$, we have $H(x)=G_1(\phi(x))$ where
  $$G_1(y)=y_1+y_2+y_3^2+y_4^2,$$
  is homogeneous of degree $1$ with respect to $e^{tA}$ for $A=\operatorname{diag}(1,1,\tfrac{1}{2},\tfrac{1}{2})$.
\end{enumerate}
So $H(x)$ is conjugate homogeneous of degree $1$ with respect to $\psi_t=\phi^{-1}\circ e^{tA}\circ\phi$ according to proposition \ref{prop:conj hom}. The conjugate homogeneous projection method defined by definition \ref{def:hom proj method} is an explicit energy preserving method. 

\subsubsection{Results}
We will perform three experiments: 
\begin{enumerate}
  \item A comparison of fixed-step size methods for the torsioned joint potential. Here the purpose is to explore the effect of energy preservation on accuracy when it comes to fixed step size methods, and compare to symplectic methods. 
  \item A comparison of fixed-step size methods for the gravitational potential. Here their purpose is to measure the accuracy and computational cost of each method. Here, we show an example of the Conjugate Homogeneous (CH) projection method performing poorly due to conditioning issues, but that the Pseudo Nonlinear Homogeneous (PNH) Projection still works well.  
  \item A comparison of adaptive time-stepping methods for the torsioned joint potential. Here the purpose is to show that adaptive time-stepping methods can outperform fixed step size symplectic methods in terms of accuracy and computational cost, and is easily implemented using our projection framework. 
\end{enumerate}

\paragraph{Description of methods} In all the experiments we will consider two explicit energy-preserving homogeneous projections of the form \ref{def:hom proj method}. The first projection is the conjugate homogeneous method (CH) using proposition \ref{prop:conj hom} and \ref{eq:dp diffeo} for the $\psi_t$. The second is a pseudo-nonlinear homogeneous projection (PNH) using theorem \ref{thm:pseudo-energy}, where for the one-step method $\Psi_h\approx \psi_t$, we use a second-order Runge-Kutta method so that the Hamiltonian is preserved up to order $O(h^{3(p+1)})$. The base step will be a fourth order Runge-Kutta method (RK4) for experiments 1 and 2, and the Dormand-Prince (RK45) method for experiment 3.

We also consider the following methods for comparison: (1) a RK4 with no projection; (2) an explicit pseudo-symplectic method of order 4 and symplectic order 9 (PS4(9)) \cite{mclachlan2024runge, tao2016explicit}; and (3) a 4th-order symplectic Gauss collocation Runge–Kutta (GLL4). For experiment 3, we will compare against the adaptive RK45 and a 6th-order symplectic Gauss collocation Runge–Kutta method (GLL6).

\paragraph{Experiment 1: fixed-step methods for the torsioned joint potential} 
We generate 10 initial conditions of the form $(q_1, q_2, p_1, p_2) = (\delta_1, \delta_2, 1, -1)$, where $|\delta_i|\le \tfrac{1}{10}$ are randomly chosen. We integrate over the time interval $t\in[0,500]$ with step-size $h=0.05$. The results are shown in figure \ref{fig:dp-errors}. 
We note that the CH and PNH projection methods significantly improve the error over the base RK4 method, for negligible computational overhead. The CH method, notably, only adds a 3\% runtime to achieve machine-precision energy preservation, while the PNH method adds about 40\% runtime. The two homogeneous projection methods have comparable accuracy as the expensive GLL4 method, which is over 3-4 times slower.

\paragraph{Experiment 2: fixed-step methods for the gravitational potential} 
Here, the methods and setup is exactly the same as experiment 1, except we use the gravitational potential $V_2$ and we integrate over the time interval $t\in[0,100]$ with step-size $h=0.01$. In this case, we see that the CH projection method is actually the least accurate. The PNH method is significantly more accurate than all the other methods, even though energy isn't preserved to machine precision this time. 

We do note that the CH projection performs worst here. A possible explanation is due to the conditioning of the conjugation map, when $h_i$ in equation \eqref{eq:dp diffeo} are trigonometric functions, as the derivative of $\cos^{-1}$ blows up near the boundary of its domain. This can lead to error amplification. Letting $J(x) = D\phi(x)$ denote the Jacobian matrix of the conjugation map, and $\psi_h = \phi^{-1} \circ e^{hA} \circ \phi$ being the homogeneous projection map then 
$$
D\psi_h(x) = \big(J(\psi_h(x))\big)^{-1} e^{hA} J(x) 
  = I + h\, J(x)^{-1} A J(x) + O(h^2).
$$
Hence, if $J(x)$ is poorly conditioned, such is the case when it contains terms like $\cos^{-1}$, then $\|J(x)^{-1}\|$ can dominate the $O(h)$ term above and therefore degrade the accuracy. Therefore, some care needs to be taken to ensure that the conjugation map is well-conditioned in the region of interest. We would therefore recommend checking the conditioning of the map at each time step and swapping to the PNH projection when necessary.

\begin{figure}[ht] 
  \centering
  \begin{subfigure}{0.8\textwidth}
    \includegraphics[width=\textwidth]{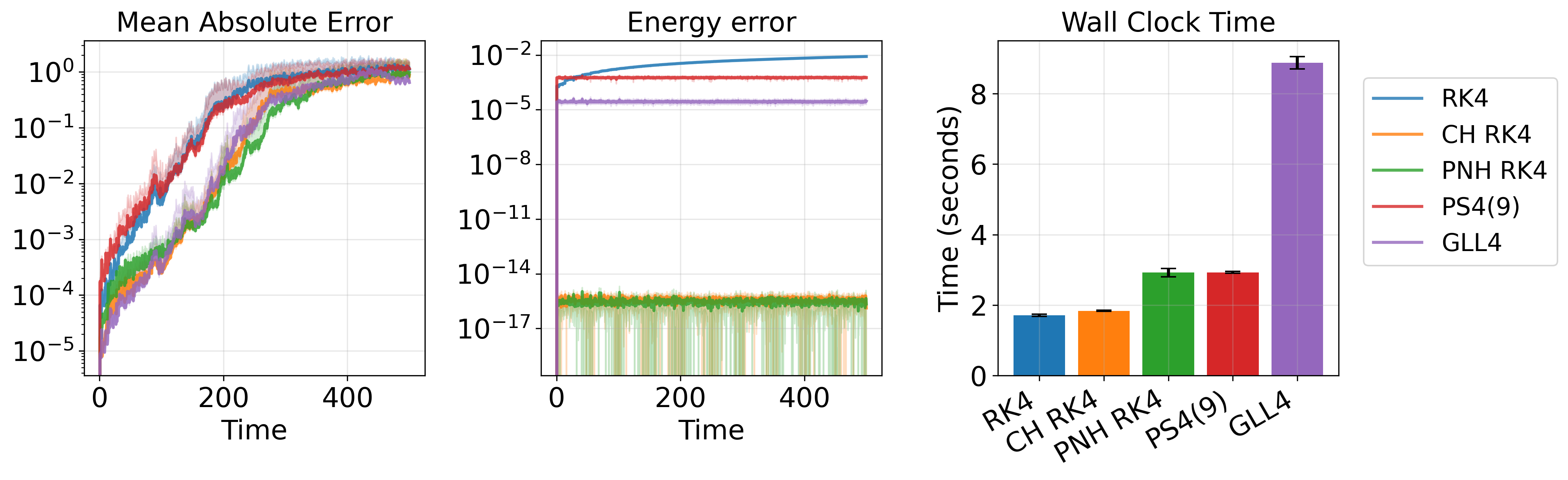} 
    \caption{Torsioned joint potential double pendulum.}
    \label{fig:dp torsioned}
  \end{subfigure} 
  \begin{subfigure}{0.8\textwidth}
    \includegraphics[width=\textwidth]{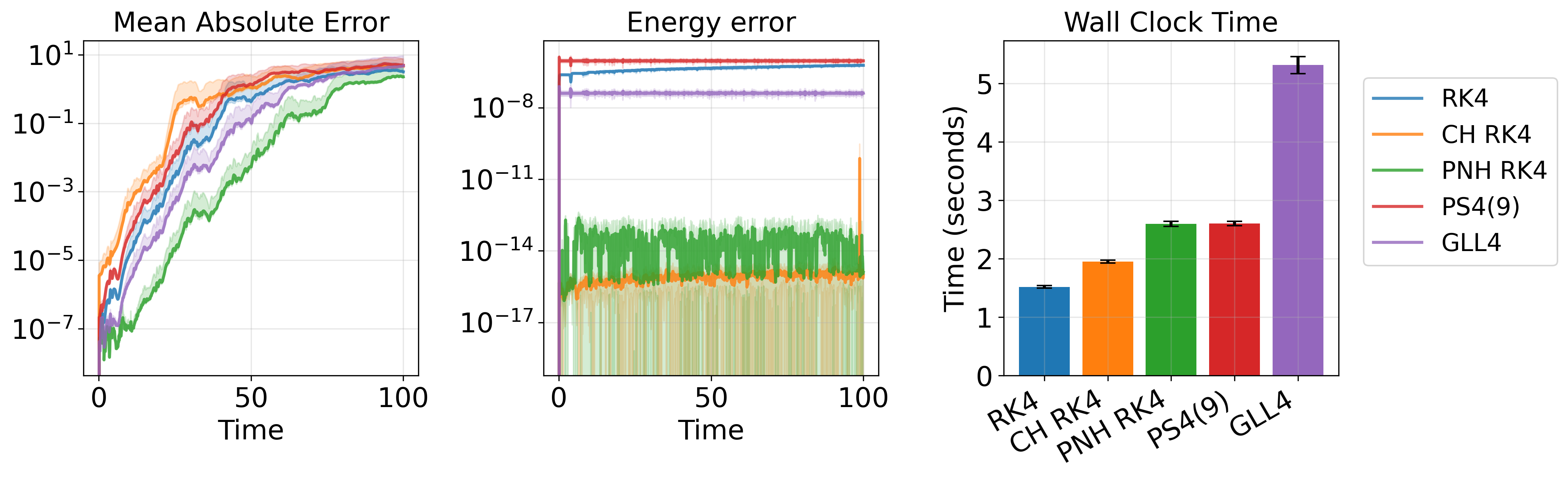} 
    \caption{Gravitational potential double pendulum.} 
    \label{fig:dp gravity} 
  \end{subfigure}
  \caption{The error, energy preservation and runtime of the double pendulum systems averaged over 10 random initial conditions. The error bars and shaded regions indicate one standard deviation.}
  \label{fig:dp-errors}
\end{figure}

\begin{figure}[ht] 
  \centering
    \includegraphics[width=0.3\textwidth]{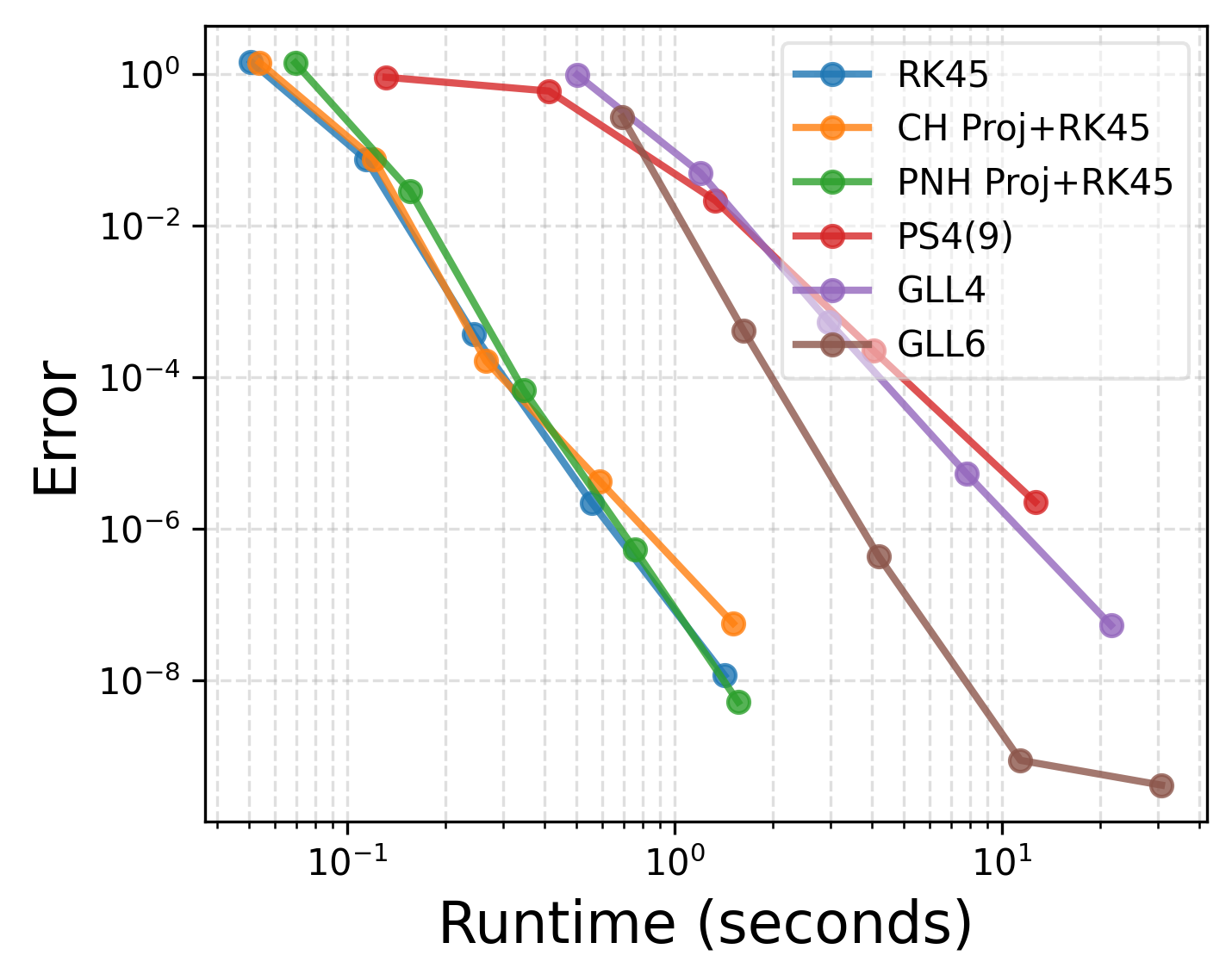} 
  \caption{The error versus runtime of adaptive time-stepping methods compared to fixed step size symplectic methods for the torsioned joint double pendulum.}
  \label{fig:dp-cost-error-adaptive}
\end{figure} 

 \paragraph{Experiment 3: adaptive-step methods for the torsioned joint potential}
Here, we consider adaptive time-stepping with the RK45 method. As adaptive symplectic methods are difficult to implement for non-separable systems like these, we include the fixed step size symplectic methods as baselines. We also include the symplectic Gauss-Lobatto-Legendre method of order 6 (GLL6) as a higher order symplectic baseline. We integrate the solution over the time interval $t\in[0,50]$ from the initial point $(q_1, q_2, p_1, p_2) = (0.1, -0.05, 1.5, -1.2)$ with various tolerances in the range $[10^{-12}, 10^{-4}]$ and time steps $h\in [10^{-3}, 10^{-1}]$. The error at the end of the interval is calculated and the total runtime is measured. These values are shown in figure \ref{fig:dp-cost-error-adaptive}. 

We see that the three adaptive methods perform the best at this timescale. The main accuracy advantages appear to come from the adaptivity, as the three adaptive RK45-based methods are roughly equivalent in terms of cost and accuracy including the non-invariant preserving RK45 method. Notably, for the same cost the adaptive methods have lower error than all the symplectic methods by several orders of magnitude. Of course, the symplectic methods have bounded energy error, which does not necessarily translate to better accuracy, especially for this timescale. The preservation of symplectic structure and energy usually manifests as better accuracy over very long time scales. So for chaotic systems like these, where small errors early on exponentially amplify, it is unsurprising that adaptivity is favoured. As such systems don't have exact solutions, it is difficult to measure long term accuracy. This is explored in the next problem. 

\subsection{Kepler problem}
The purpose of this section is to show that error controlled homogeneous projection methods, can be an effective choice of integrator even when there are fast, explicit symplectic methods available. Furthermore, we will demonstrate our projection methods for systems with multiple invariants. In this example, we consider integration of the Kepler problem, which is a two-body problem with gravitational potential in terms of the position and momentum variables $q,p\in\R^2$. Three functionally independent invariants are given by
\begin{align}
    H(q,p) &= \frac{1}{2} p^\top p - \frac{1}{\|q\|},\quad\text{(Energy)}\\ 
    L(q,p) &= q_1 p_2 - q_2 p_1,\quad\text{(Angular momentum)}\\
    A(q,p) &= p_2(p_2q_1-p_1q_2) - \frac{q_1}{\sqrt{q_1^2 + q_2^2}}, \quad\text{(Laplace-Runge-Lenz component)}
\end{align}
The energy is homogeneous of degree $k=2$ with respect to the linear group action $\psi^H_t(q,p) = (e^{-2t}q, e^{t}p)$ and $L$ is homogeneous with respect to $\psi^L_t(q, p) = (e^{at}q, e^{bt}p)$.
for free parameters $a$ and $b$. This will be used to construct an alternating projection method as in definition \ref{def:alt proj}. We consider the initial condition corresponding to an elliptical orbit with eccentricity $e\in[0, 1)$ and  $q(0) = (1-e,0)$ and $p(0) = (0,\sqrt{(1+e)/(1-e)})$. For these conditions, the period of the orbit is $2\pi$. As the eccentricity $e\rightarrow 1$, the orbit passes closer and closer to the origin (i.e., the location of the second body). This means the potential term in the energy becomes very negative $-\frac{1}{\|q\|}\rightarrow -\infty$ and therefore the kinetic term becomes large $\frac{1}{2} p^\top p \rightarrow \infty$ to keep the energy constant. On the other end of the orbit, we see the opposite effect, where dynamics is slowly varying, meaning the problem contains varying time scales making this the perfect arena to test adaptive methods. 

\subsubsection{Results}
 
We perform two experiments: 
\begin{enumerate}
  \item A cost-error analysis. Here we compare various methods over $100$ orbital periods with varying eccentricities, timesteps, tolerance and measure the error versus computational cost. Here we will show that the homogeneous projection methods are faster and more accurate than specialized geometric methods. 
  \item A long-term simulation over $10000$ orbital periods with high eccentricity. Here we will compare the solution trajectories, errors and computational cost as a function of time to measure the stability and long term performance of the methods.
\end{enumerate}

\paragraph{Description of methods} For both experiments, we will test the following methods: (1) the DOP853 method, which is the same base method that is used for the following projection methods; (2) Linear Homogeneous (LH) Projection (definition \ref{def:hom proj method}) with $\psi^H_t(q, p)=(e^{-2t}q, e^{t}p)$ to preserve $H$ exactly; (3) non-linear homogeneous (NLH) projection (theorem \ref{thm:pseudo-energy}) with a second-order Runge-Kutta method for $\Psi_1$ to preserve all three invariants $H$, $L$ and $A$ up to order $O(h^{(p+1)(q+1)})=O(h^{27})$; (4) an Alternating Linear Homogeneous (ALH) Projection according to definition \ref{def:alt proj}, where we alternate between $\psi^L_t$, $\psi^H_t$ and an order-2 PNH Projection to preserve $A$; and (5) standard orthogonal projection to preserve all three invariants exactly, solved using Newton iterations until convergence to within $10^{-13}$ or a maximum of 20 iterations. We will also compare against two more geometric integrators: (6) an explicit symplectic method of order 8 (Suzuki8) (see \cite{hairer2006geometric} for the coefficients); and (7) an explicit, symmetric, adaptive step size method (Adaptive Suzuki8) described in \cite{hairer2005explicit} using Suzuki8 as the base step integrator.

\paragraph{Experiment 1: cost-error analysis} Here, for all seven methods, we measure the mean average error as a function of cost, measured in wall clock time (runtime) after 100 orbital periods ($t\in[0, 200\pi]$) and for eccentricities $e=0.8, 0.9, 0.99$. For the DOP853 and projection methods, we vary the tolerance in the range $[10^{-14}, 10^{-4}]$. For the Suzuki8 method, we use step sizes in the range $h\in[5\times 10^{-5}, 1\times 10^{-1}]$. For the Adaptive Suzuki8 method, we vary the ``step-density" $\varepsilon\in[5\times 10^{-5}, 1\times 10^{-1}]$ (see \cite{hairer2005explicit} and \eqref{hairer-soderlind}). The results are shown in figure \ref{fig:kepler-symp}.  

We see that for all eccentricities, the homogeneous projection methods outperform all other methods in terms of accuracy and cost. The Standard Projection is similar to the homogeneous projection methods, albeit slightly less accurate and less stable. However, for high eccentricites and low tolerances, the Standard Projection is unable to attain the same accuracy as the Homogeneous Projection methods. The fixed-step symplectic method performs very well at low eccentricities, even outperforming the Adaptive Suzuki8 method. This is somewhat expected as adaptivity is not as beneficial here. However, as the eccentricity increases, the performance of the Suzuki8 method degrades significantly, becoming the least accurate and most expensive method at $e=0.99$. The Adaptive Suzuki8 method performs better, but is still slower and less accurate than the homogeneous projection methods.

We see that for all eccentricities, the homogeneous projection methods outperform all other methods in terms of accuracy and cost. The NLH projection is the most accurate method overall, while the LH projection is the fastest method overall. The Suzuki8 method performs poorly in these tesets due to the use of fixed step sizes. The Adaptive Suzuki8 method performs better, but is still slower and less accurate than the homogeneous projection methods. 

\begin{figure}[ht] 
  \centering
  \begin{subfigure}{0.245\textwidth}
    \includegraphics[width=\textwidth]{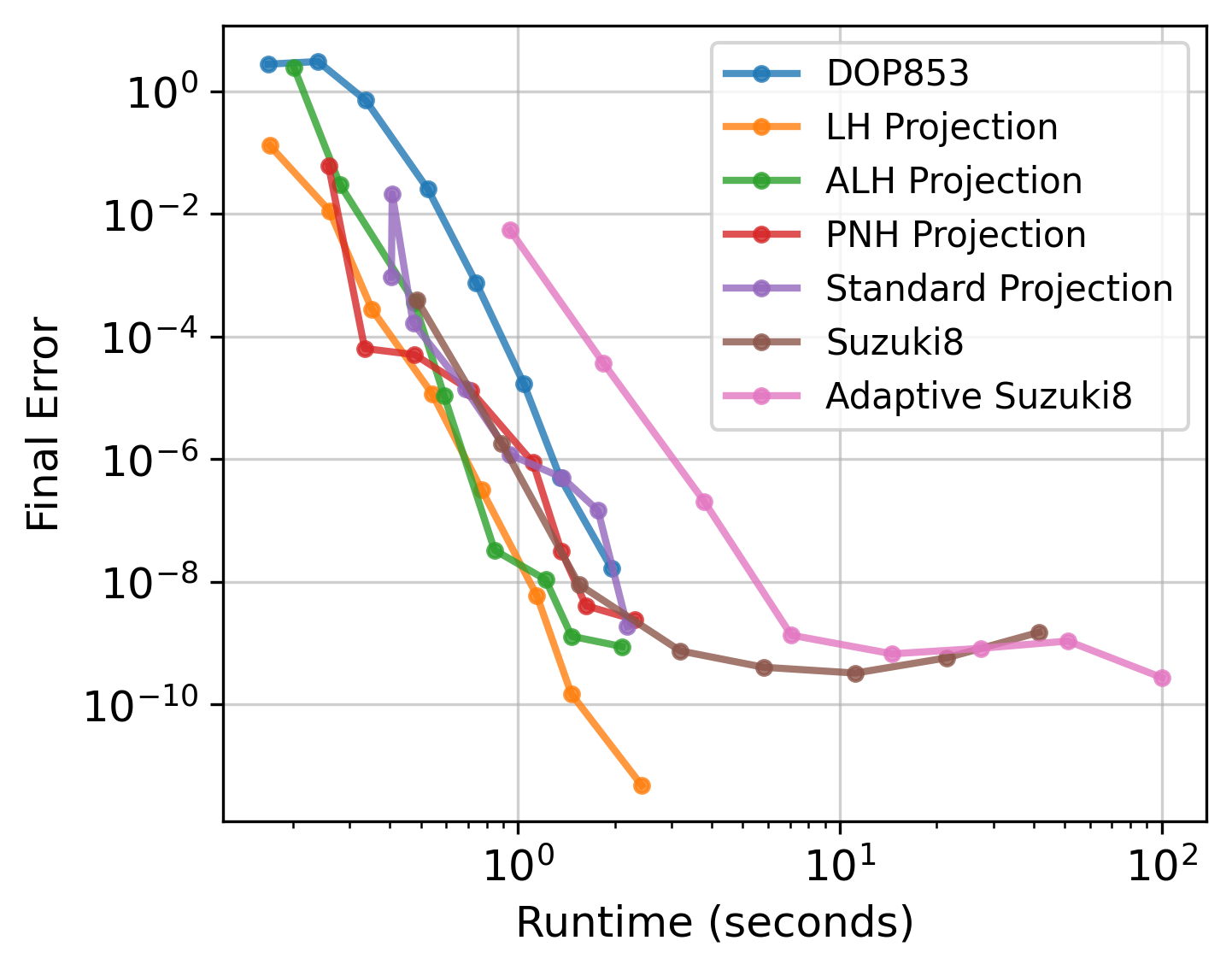} 
    \caption{eccentricity = $0.6$} 
    \label{fig:kfig0} 
  \end{subfigure} 
  \begin{subfigure}{0.245\textwidth}
    \includegraphics[width=\textwidth]{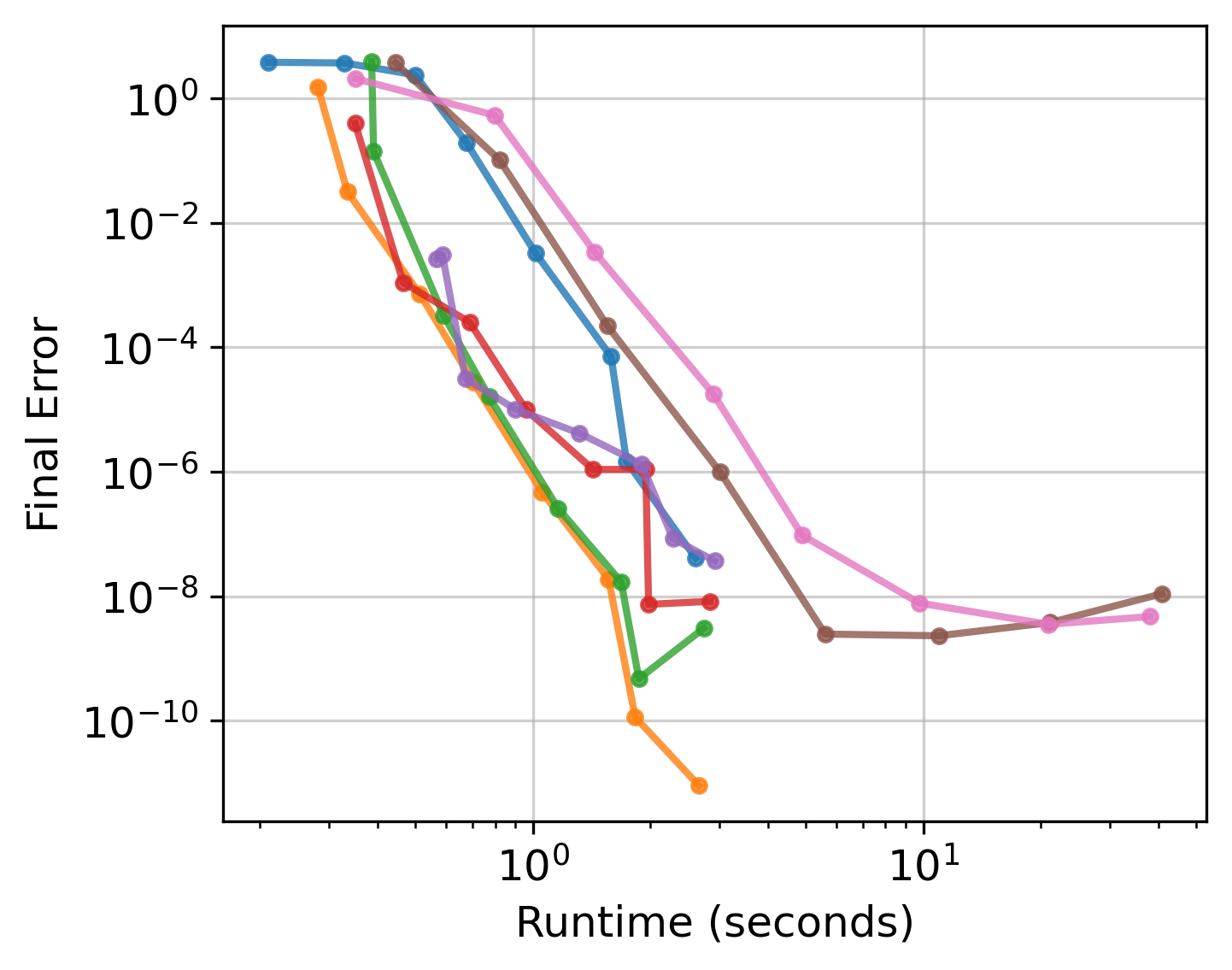} 
    \caption{eccentricity = $0.8$} 
    \label{fig:kfig1} 
  \end{subfigure} 
  \begin{subfigure}{0.245\textwidth}
    \includegraphics[width=\textwidth]{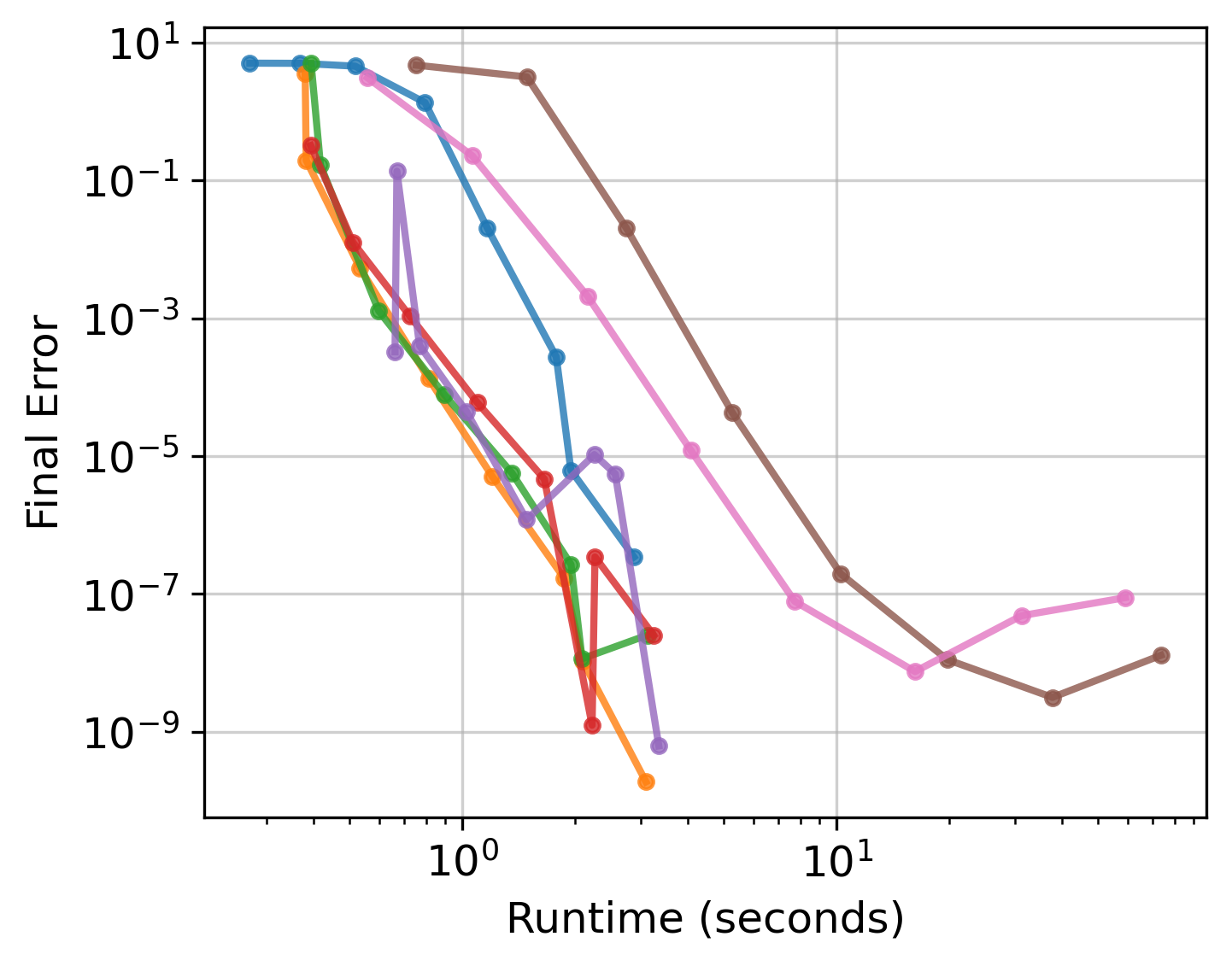} 
    \caption{eccentricity = $0.9$}
    \label{fig:kfig2}
  \end{subfigure} 
  \begin{subfigure}{0.245\textwidth}
    \includegraphics[width=\textwidth]{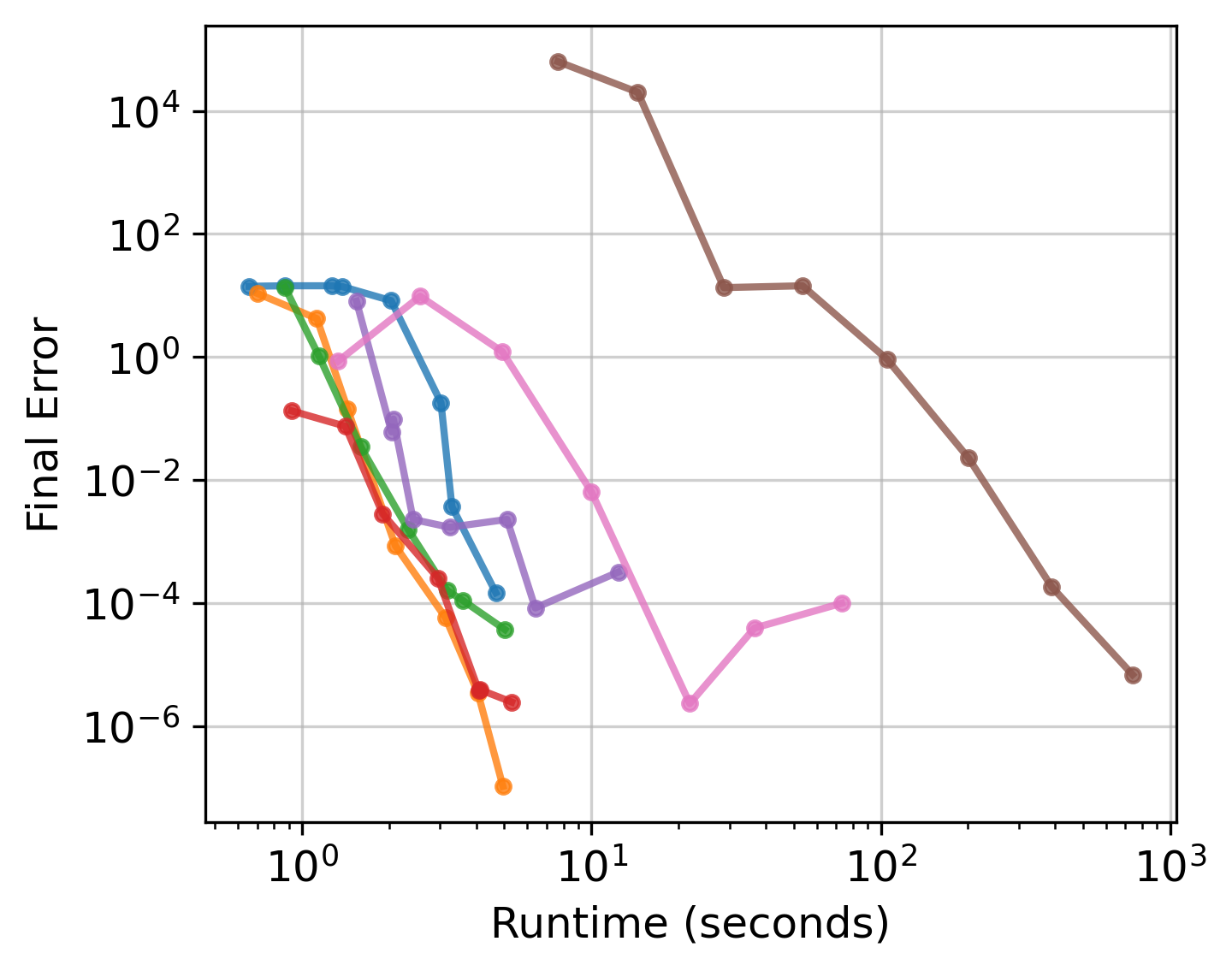} 
    \caption{eccentricity = $0.99$}
    \label{fig:kfig3}
  \end{subfigure}
  \caption{The error as a function of function evaluations (dashed lines) and runtime (solid lines) for the Kepler problem over a time interval of $[0, 1000]$ with varying eccentricities.}
  \label{fig:kepler-symp}
\end{figure}

\begin{figure}[ht] 
  \centering
  \begin{subfigure}{0.99\textwidth}
  \includegraphics[width=0.99\textwidth]{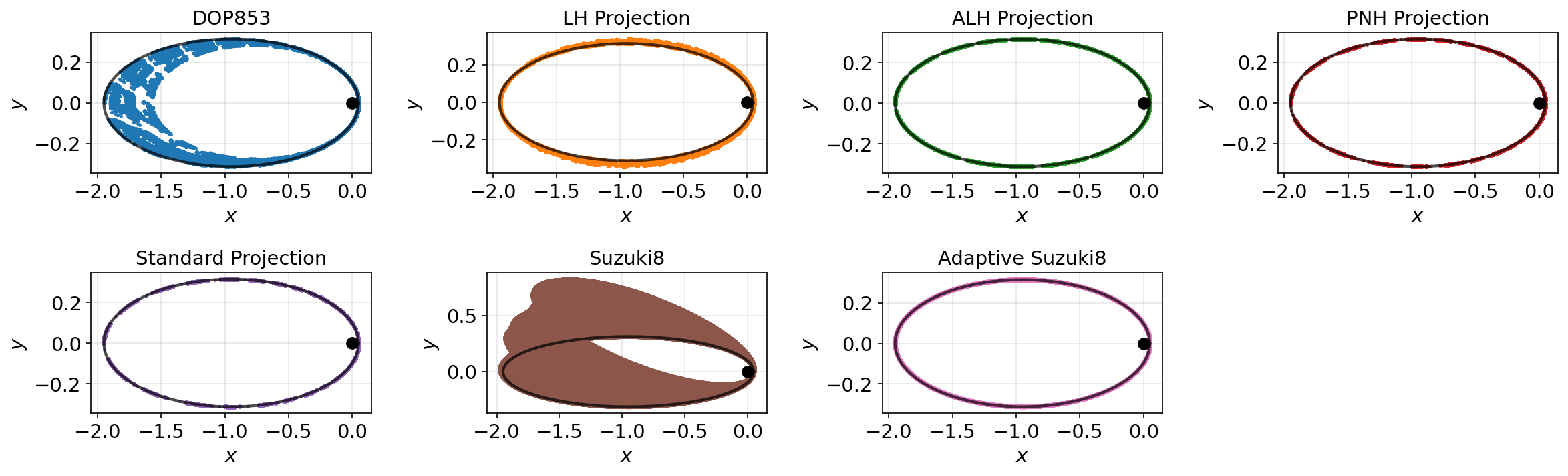} 
    \caption{Kepler orbits of the different methods. The exact solution is shown in black.}  
    \label{fig:sfig1} 
  \end{subfigure}
  \begin{subfigure}{0.99\textwidth}
    \includegraphics[width=\textwidth]{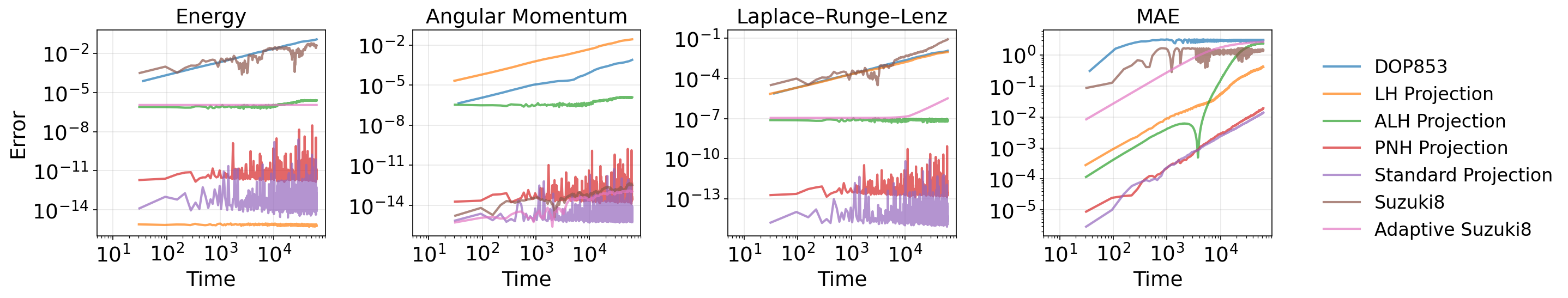} 
    \caption{Errors of the invariants and the global mean average error (MAE) of the different methods.}  
    \label{fig:sfig2} 
  \end{subfigure}
  \begin{subfigure}{0.4\textwidth}
    \includegraphics[width=\textwidth]{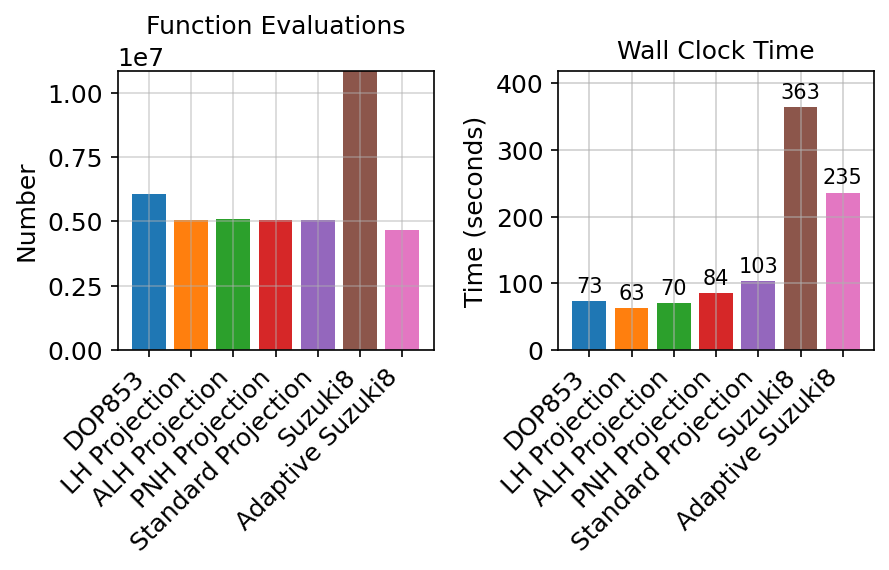} 
    \caption{Total function evaluations and runtime of each method.}
    \label{fig:sfig3}
  \end{subfigure}  
  \begin{subfigure}{0.4\textwidth}
    \includegraphics[width=\textwidth]{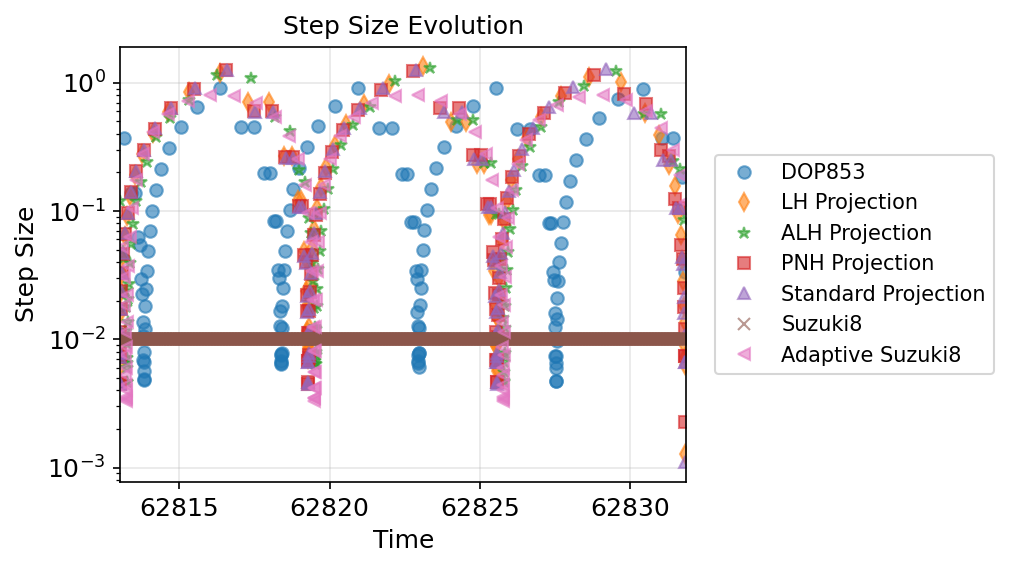} 
    \caption{Step sizes taken by each method over time (last three orbits).}
    \label{fig:sfig4}
  \end{subfigure} 
  \caption{Results for the Kepler problem with eccentricity $e=0.95$ over $10000$ periods with tolerance $10^{-6}$ and step size of $h=0.01$ for the Suzuki8 method.}
  \label{fig:kepler-errors}
\end{figure}


\paragraph{Experiment 2: long-term simulation} From figure \ref{fig:kepler-errors} we see that the all the projection methods have error about 2-4 orders of magnitude smaller than the DOP853 method for the same runtime. The error growth is linear in time for all methods, which is expected.

We see that the DOP853 method exhibits linear error growth for the invariant errors, tho at a faster rate than the projection methods. The LH projection preserves $H$ exactly, but the errors in $L$ and $A$ grow linearly. The PNH projection preserves all invariants up to roughly 10 or more digits. The LH+PNH projection preserves $H$ exactly, and $L$ and $A$ up to order $O(h^{(p+1)(q+1)})$, and the errors remain bounded over the simulation. 

The LH projection has worse $L$ and $A$ conservation despite preserving $H$ exactly. One possible explanation is that $L$ is homogeneous of degree $k=-1$ with respect to $\psi_s(q,p)$, and therefore $L(\psi_s(q,p))= L(e^{-2s}q, e^{s}p) = e^{-s}L(q,p)$ which could be adding a small bias to $L$ each time the projection is made. Despite this fact, the overall error is up to two order of magnitude better than the DOP853 method. We notice a similar effect for the LH+PNH projection, where $H$ is preserved exactly, but $L$ is not as well preserved as the PNH projection, suggesting that exact preservation of $H$ can affect the conservation of $L$. We do not see this with $A$ is invariant under $\psi_s(q,p)$. 

The invariant errors for the pseudo methods do not show any growth with time, indicating that they remain bounded near the energy preserving manifold. This suggests that newton iterations are not necessary for acheiving good qualitative behaviour and that the pseudo-invariant-preserving methods are robust enough.

In terms of cost, we see that the homogeneous projection methods actually speed up the total computation time compared to the base method DOP853. A possible explanation is that the drift from the standard method puts the solution into a more complex orbit with faster time scales, requiring smaller time steps to resolve the accuracy. This is corroborated by the fact that the DOP853 method also requires more function evaluations. The Standard Projection method requires 50\% more run time, due to the need for Newton iterations, but still produces a solution of comparable accuracy to the homogeneous projection methods. 

\section{Semi-discretised conservative PDEs}
The purpose of this section is to demonstrate that homogeneous projection on PDEs with multiple conservation laws. In particular, we will demonstrate that  even when the semidiscretization doesn't yield an ODE with an equivalent discrete conservation law (only an approximate one), that doing a pseudo-projection step to partially preserve that invariant anyway can yield much better solutions. We consider two examples: the Korteweg-de Vries (KdV) equation and the Camassa-Holm (CH) equation. Both equations are nonlinear dispersive wave equations arising in shallow water wave theory and possess multiple conservation laws. In either case, there exist straightforward ways to semidiscretise these PDEs to yield an ODE with in skew-gradient form, thus preserving exactly a discretised version at least one invariant. That is, given a PDE for $u(x, t):\Omega\times\R\rightarrow\R$ on periodic domain $\Omega=[0,L]$ with $j=1,...,m$ conservation laws of the form $\mathcal{H}_j(u)=\int_\Omega\rho_j(u, u_x,...)\rd x$, find an ODE on an $N$-dimensional equispaced grid of the form $\dot{\mathbf{u}} = S\nabla_{\mathbf{u}}H_a\in\R^N$, for some skew-symmetric $S$ and such that $u(x_i, t)\approx \mathbf{u}_i(t)$, for $i=1,...,N$ and $H_j(\mathbf{u})\approx \mathcal{H}_j(u)$. Finding semi-discretisations of this form exactly preserve $H_a$, however, in doing so, we usually only approximately preserve the other invariants when $j\ne a$. This results in errors such as aliasing or build up of truncation errors that manifest in breaking these laws over long time. We will make the claim that even if this is the case, the physical fidelity of the solution dictates that these other invariants should still be controlled, even if they are not preserved exactly by the semidiscrete ODE. To this end, we will apply pseudo-nonlinear homogeneous projections to control the growth of these other invariants, and show that this results in more accurate and stable solutions.

\subsection{KdV equation}
We consider the Korteweg-de Vries (KdV) equation, a prototypical nonlinear dispersive wave equation arising in shallow water wave theory. The KdV equation is given by
$$u_t = - 6uu_x - u_{xxx},$$
with periodic boundary conditions on the spatial domain $x\in[0, 40]$. The continuous KdV equation possesses infinitely many conservation laws, three being the mass, momentum and energy:
\begin{equation}
\mathcal{H}_1 = \int u \,dx,\quad
\mathcal{H}_2 = \int u^2 \,dx ,\quad
\mathcal{H}_3 = \int \left(u^3 - \tfrac{1}{2}u_x^2\right) dx.
\end{equation}
We discretise the spatial domain using $N=64$ grid points and centered eigth-order finite-differences to compute spatial derivatives, yielding a system of ODEs of the form 
$$\dot{\mathbf{u}} = -D\nabla_{\mathbf{u}} H_3(\mathbf{u})\in\R^N,$$
where $D$ is the skew-symmetric finite-difference differentiation matrix. Discrete analogs of the conservation laws are 
\begin{align}
H_1(\mathbf{u}) = \Delta x \sum_{i=1}^N \mathbf{u}_i,\quad
H_2(\mathbf{u}) = \Delta x \sum_{i=1}^N \mathbf{u}_i^2,\quad
H_3(\mathbf{u}) = \Delta x \sum_{i=1}^N \left(\mathbf{u}_i^3 - \tfrac{1}{2}\left(D\mathbf{u}\right)_i^2\right). 
\end{align} 
Note that only the mass and energy are exact invariants of the semi-discretized system. The momentum is only approximately conserved, with small violations arising from errors inherent to the semi-discretization. Despite this, we will still include $H_2$ in the pseudo-projection operator to help regularize the solution and prevent high frequency modes from growing uncontrollably. 
\subsubsection{Results}
We consider a single-soliton initial condition $u(x,0) = \frac{c}{2}\operatorname{sech}^2\!\left(\frac{\sqrt{c}}{2}(x - x_0)\right),$ and initial position $x_0=20$ and integrate the solution over the time interval $t\in[0,100]$. We compare three methods: (1) alternating homogeneous projections with $H_1$, $H_2$ and $H_3$, (2) the same method but only projecting $H_1$, and $H_3$, and (3) standard DOP853. We use a tolerance of $10^{-5}$ for all three. The experiment is repeated three times with increasingly challenging initial conditions by varying the soliton speed $c=2,4,8$.

The results are shown in figure \ref{fig:kdv-sols}. We see that for high soliton speeds, the DOP853 and $H_1$+$H_3$ projection becomes unstable early in the simulation, while the $H_1$,$H_2$+$H_3$ projection is more accurate and more stable throughout the simulation. Even at $c=8$ it still maintains its shape and most of its errors manifest as incorrect soliton speed. even though the momentum $H_2$ is not an exact invariant of the semi-discretized system, the projection method controls its growth and acts as a regulariser to help prevent the high frequency modes from blowing up. This could also be due to the fact that the projection method keeps the invariants bounded, preventing the solution from drifting into unstable regimes (see \ref{app: invariant preservation of PDEs}).

\begin{figure}[ht]  
  \centering
  \begin{subfigure}{0.8\textwidth}
    \includegraphics[width=\textwidth]{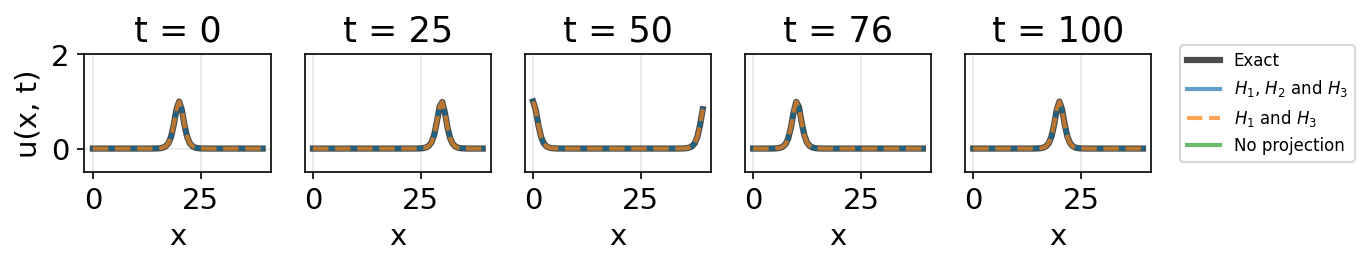} 
    \caption{Soliton speed: $c=2$.}
    \label{fig:kdv-sol1}
  \end{subfigure}
  \begin{subfigure}{0.8\textwidth}
    \includegraphics[width=\textwidth]{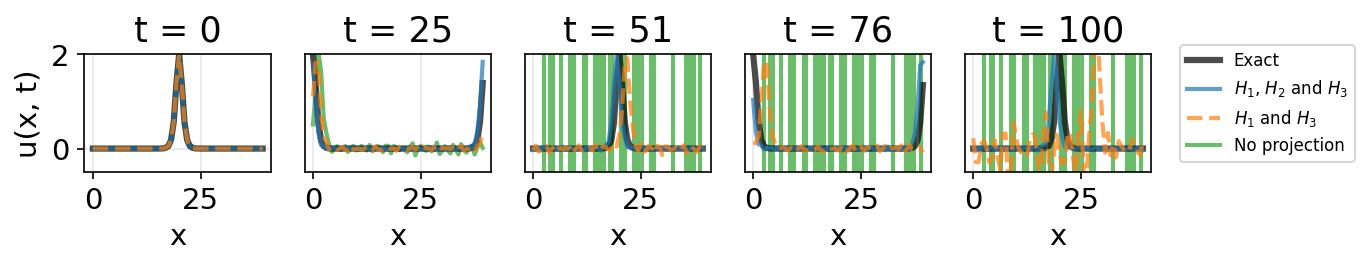} 
    \caption{Soliton speed: $c=4$.}
    \label{fig:kdv-sol2}
  \end{subfigure}
  \begin{subfigure}{0.8\textwidth}
    \includegraphics[width=\textwidth]{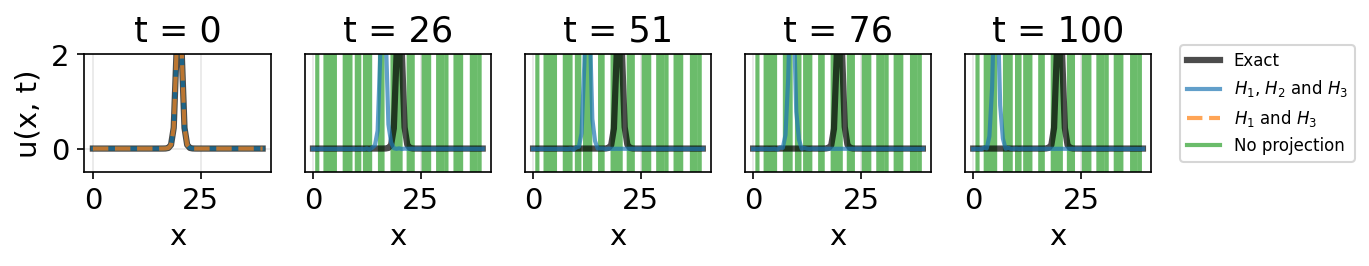} 
    \caption{Soliton speed: $c=8$.}
    \label{fig:kdv-sol3}
  \end{subfigure}
  \caption{KdV equation with increasingly challenging initial conditions.}
  \label{fig:kdv-sols}
\end{figure}
\subsection{Camassa-Holm equation}
The Camassa-Holm (CH) equation is a nonlinear dispersive wave equation that models shallow water waves. It is given by
$$u_t - u_{txx}  + 3uu_x u = 2u_x u u_{xx} + u u_{xxx},$$
with periodic boundary conditions on the spatial domain $x\in[0, 80]$. The CH equation possesses several conservation laws, including the energy and momentum
\begin{equation}
\mathcal{H}_1 = \int (u^2 + u_x^2) \,dx ,\quad
\mathcal{H}_2 = \int (u^3 + u u_x^2) \,dx.
\end{equation}
We discretize the spatial domain using $N=128$ grid points and centered fourth-order finite-differences to compute spatial derivatives, yielding a system of ODEs similar to that presented in \cite{eidnes2021linearly}
\begin{equation}
    \dot{\mathbf{u}}= (I-D_{xx})^{-1}D_x\nabla H_2(\mathbf{u}),
\end{equation}
where $D_x$ and $D_{xx}$ are the skew-symmetric finite-difference differentiation matrices of order one and two respectively. The discrete analogs of the conservation laws become
\begin{align}
H_1(\mathbf{u}) = \Delta x \sum_{i=1}^N \left(\mathbf{u}_i^2 + (D_x\mathbf{u})_i^2\right),\quad
H_2(\mathbf{u}) = \Delta x \sum_{i=1}^N \left(\mathbf{u}_i^3 + \mathbf{u}_i (D_x\mathbf{u})_i^2\right).
\end{align} 
\subsubsection{Results}
We consider the peakon initial condition $u(x,0) = c \exp(-|x - ct - x_0|)$ with $x_0=40$ and integrate the solution over the time interval $t\in[0,100]$. We compare three methods: (1) alternating homogeneous projections with $H_1$ and $H_2$, (2) the same method but only projecting $H_2$, and (3) standard DOP853. We use a tolerance of $10^{-5}$ for all three. The experiment is repeated three times with increasingly challenging initial conditions by varying the soliton speed $c=0.02,0.2,2$.

The results are shown in figure \ref{fig:ch-sols}. The main observation here is that adding the alternating pseudo-projection for $H_1$ not only stabilises the solution, but preserves the shape and speed of the solution, even for extreme initial conditions relative to the grid resolution. Furthermore, we find that all the invariant errors are bounded for the projection methods, these results are detailed in \ref{app: invariant preservation of PDEs}. 

\begin{figure}[ht]  
  \centering
  \begin{subfigure}{0.8\textwidth}
    \includegraphics[width=\textwidth]{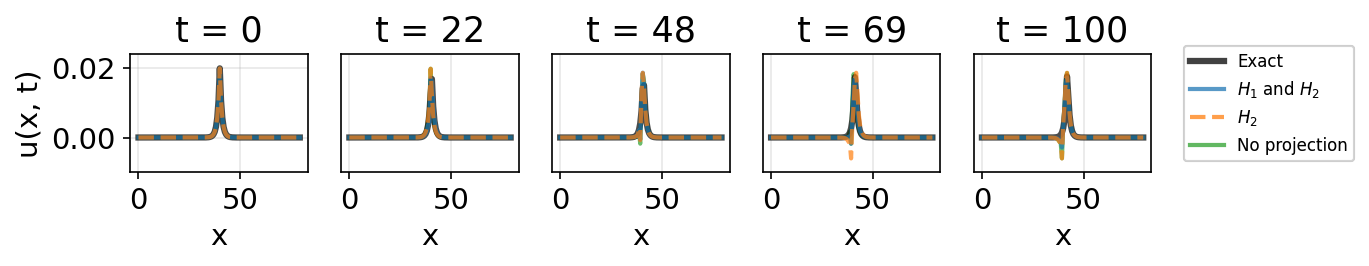} 
    \caption{Soliton speed: $c=0.02$.}
    \label{fig:ch-sol1}
  \end{subfigure}
  \begin{subfigure}{0.8\textwidth}
    \includegraphics[width=\textwidth]{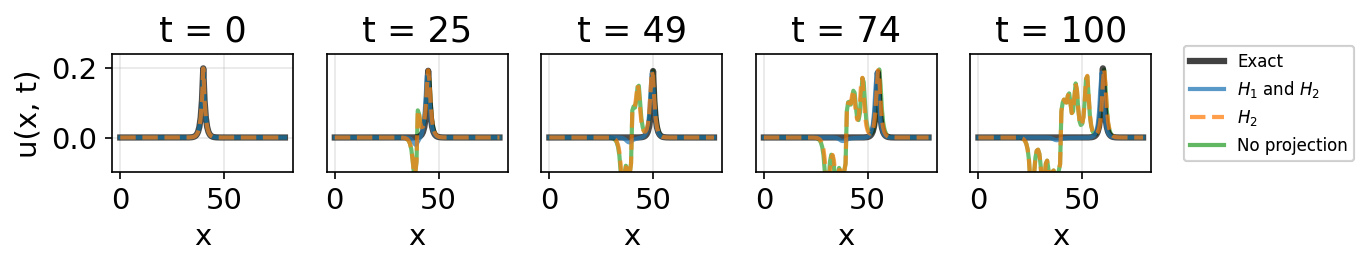} 
    \caption{Soliton speed: $c=0.2$.}
    \label{fig:ch-sol2}
  \end{subfigure}
  \begin{subfigure}{0.8\textwidth}
    \includegraphics[width=\textwidth]{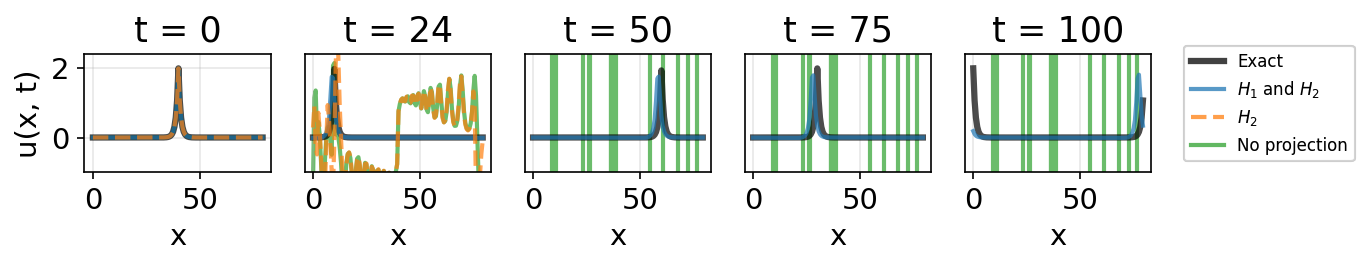} 
    \caption{Soliton speed: $c=2$.}
    \label{fig:ch-sol3}
  \end{subfigure}
  \caption{Camassa-Holm equation with increasingly challenging initial conditions.}
  \label{fig:ch-sols}
\end{figure}

\section{Conclusion}
We have presented a framework for constructing explicit invariant-preserving integrators using homogeneous projection. By leveraging homogeneous symmetries of invariants, we derived projection operators that can be evaluated in closed form, producing explicit methods that preserve invariants exactly without requiring nonlinear solves. When a closed-form homogeneous symmetry is unavailable, a pseudo-projection based on a one-step integration of an auxiliary generator preserves multiple invariants to order $\mathcal{O}(h^{(p+1)(q+1)^r})$, allowing arbitrary accuracy with minimal computational cost. We also presented an alternating projection that is more efficient and can accomodate multiple invariants. The framework naturally extends to nonlinear and conjugate symmetries, enabling its use for a wide range of systems, from finite-dimensional Hamiltonian dynamics to semi-discretized PDEs.

Across all numerical experiments, the homogeneous projection methods consistently outperform standard symplectic and adaptive integrators in both accuracy and efficiency. While symplectic schemes maintain phase-space volume only for fixed step sizes and lose structure under adaptive control, our methods remain fully compatible with adaptive step-size selection without compromising invariant preservation. This combination of structure preservation and adaptivity is a key advantage: invariant-controlled adaptivity allows the solver to automatically refine near rapid transients while coarsening in smooth regions, maintaining high accuracy at lower cost. In the double pendulum and Kepler experiments, the homogeneous projection methods achieved several orders of magnitude smaller error than comparably expensive symplectic and reversible methods. 

When it comes to semidescretised conservative PDEs, we suggested that some challenges in finding stable and conservative spatial discretisations can be overcome by a projection. This is corroborated to some extent in our numerical experiments with the KdV and Camassa-Holm equations. It would be interesting to explore this further in future work.

Overall, homogeneous projection offers a practical, general, and easily implemented route to adaptive, explicit, structure-preserving integration. It makes invariant-preservation much more attractive with adaptive schemes, providing a useful tool for simulating complex dynamical systems with high fidelity and efficiency.

\section*{Acknowledgements}
During the preparation of this work the author used ChatGPT, Deep Research and GitHub Copilot in order to improve clarity of text and presentation and for writing code. The author reviewed and edited the content as needed and takes full responsibility for the content of the published article.

We would like to thank the Research Council of Norway for funding the PhysML project (338779).
\bibliographystyle{plain}
\bibliography{bibliography}

\appendix
\section{Convergence of pseudo-invariant-preserving homogeneous projection methods}\label{app:energy convergence}
In this section we verify the convergence rate of the pseudo non-linear homogeneous projection method described in theorem \ref{thm:pseudo-energy} and corollary \ref{cor:superlinear}. We consider a non-linear oscillator in four dimensions with Hamiltonian
\begin{equation}\label{eq:nonlinear oscillator}
  H = (p_1^2 + p_2^2)/2 + 3(0.5q_1^4 + q_2^4) + 6(q_1^2 + 2q_2^2) + 2q_1q_2(q_1^2 + 2q_2^2) + 3\sin(5q_1)\cos(3q_2)
\end{equation}
and measure the energy error after one time step of size for varying $h$. Letting $\Phi_h^{[p]}$ be an order $p$ one step method and $\Psi_h^{[q]}$ be an order $q$ projection method. Then we test the energy preservation of $\widehat{\Phi}_h^{p,q,r}= \left(\Psi_h^{[q]}\right)^{r}\circ\Phi_h^{[p]}$ for $p = 1, 2, 4$, $q=1, 2, 4$ and $r=1, 2, 3$. The results are shown in figure \ref{fig:convergence}, where we see that the energy error scales as $O\big(h^{(p+1)(q+1)^r}\big)$ as predicted by corollary \ref{cor:superlinear}.

\begin{figure}[ht]
  \centering
  \includegraphics[width=0.9\textwidth]{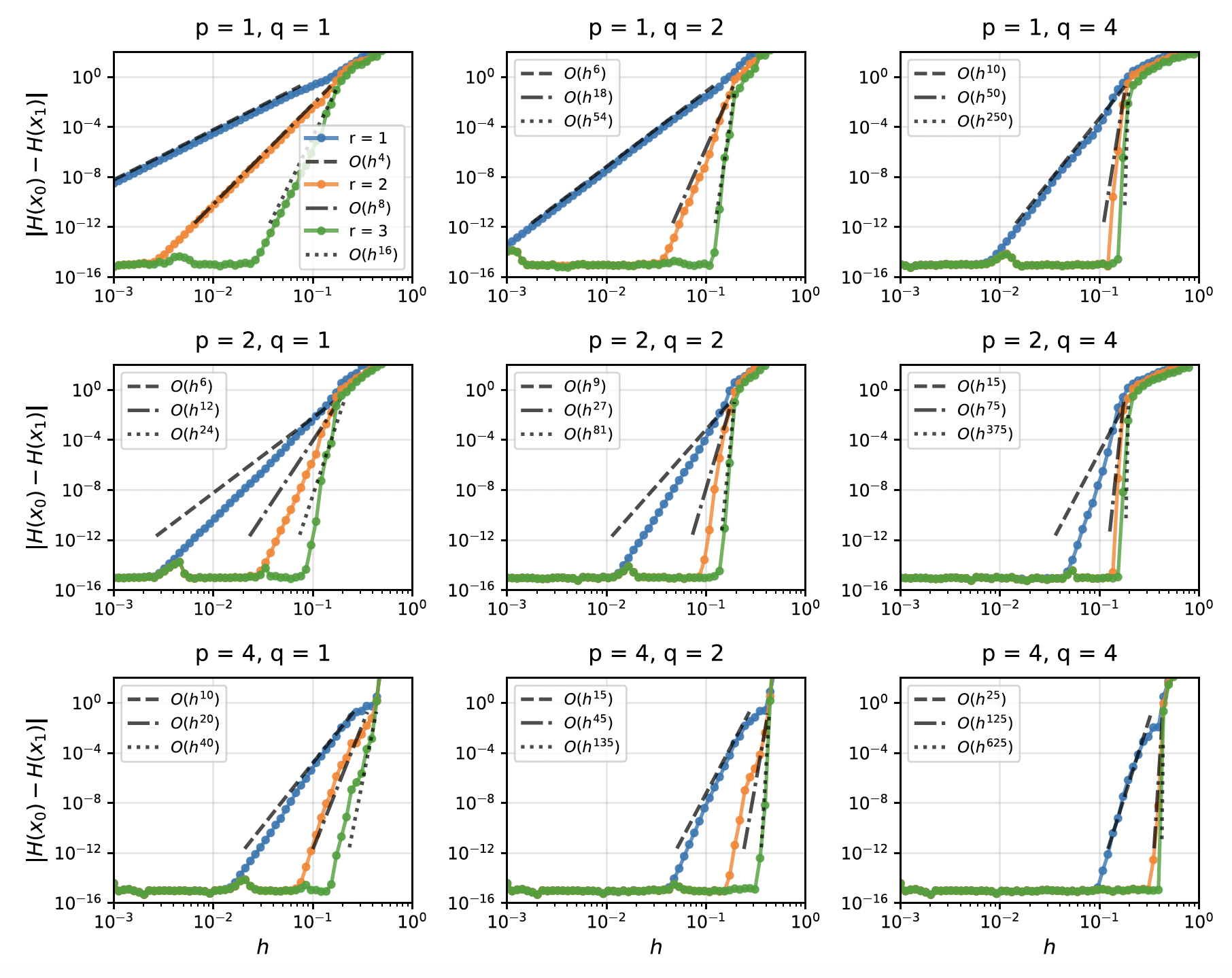} 
  \caption{Convergence analysis of the pseudo non-linear homogeneous projections for the non-linear oscillator in four dimensions. The black lines indicate the theoretical convergence rates, the blue, orange and green lines indicate the $r=1, 2, 3$ methods, respectively.}
  \label{fig:convergence}
\end{figure}

\section{Additional figures for KdV and Camassa-Holm experiments}\label{app: invariant preservation of PDEs}
Here we present additional figures for the KdV and Camassa-Holm experiments showing the invariant preservation of the different methods. Figures \ref{fig:kdv-invs} and \ref{fig:ch-invs} show that the alternating homogeneous projection method keeps the invariants bounded throughout the simulation, while the other methods exhibit drift and instability over these time intervals.

\begin{figure}[ht]  
  \centering
  \begin{subfigure}{0.6\textwidth}
    \includegraphics[width=\textwidth]{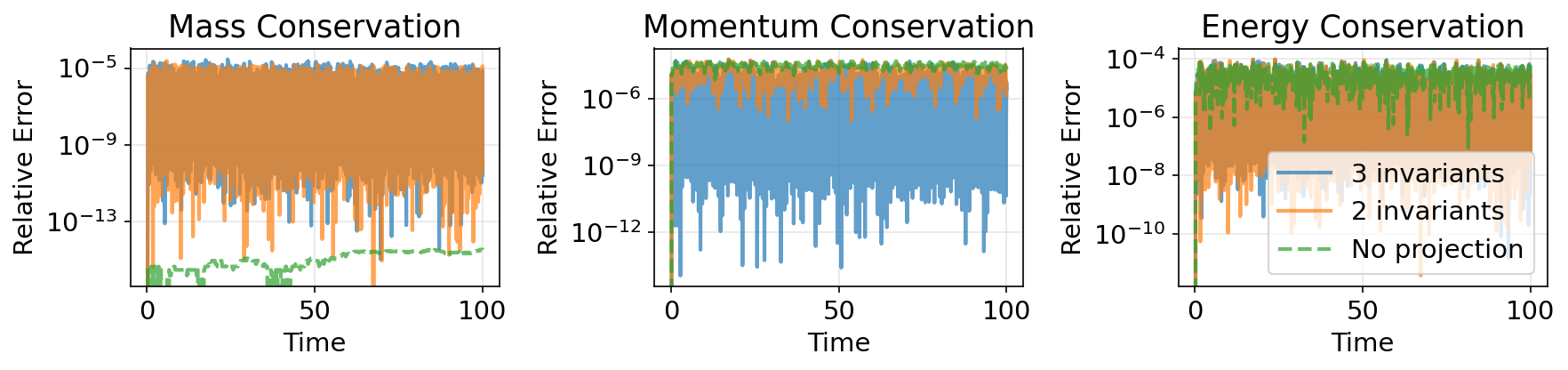} 
    \caption{Soliton speed: $c=2$.}
    \label{fig:kdv-inv1}
  \end{subfigure}
  \begin{subfigure}{0.6\textwidth}
    \includegraphics[width=\textwidth]{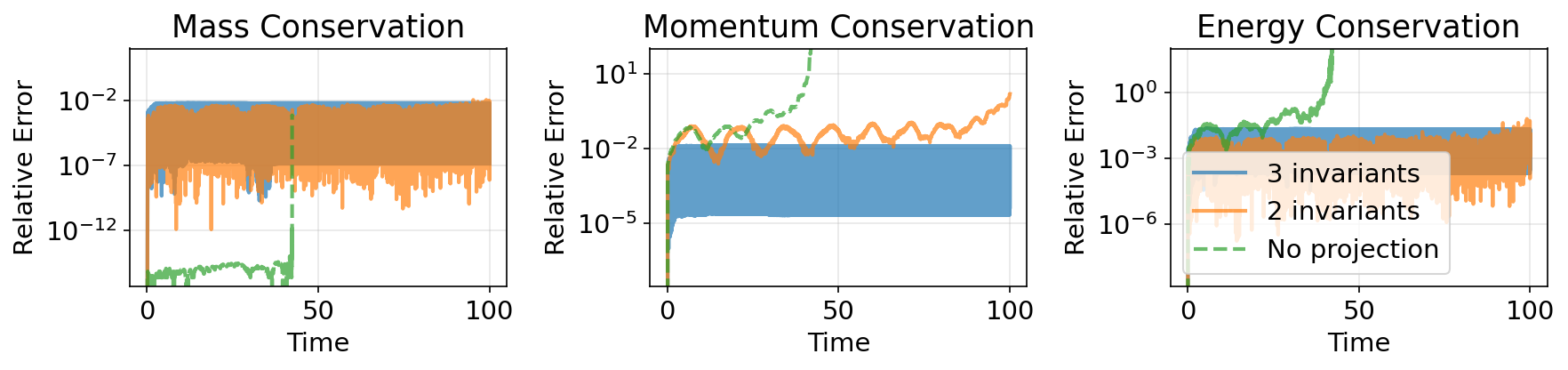} 
    \caption{Soliton speed: $c=4$.}
    \label{fig:kdv-inv2}
  \end{subfigure}
  \begin{subfigure}{0.6\textwidth}
    \includegraphics[width=\textwidth]{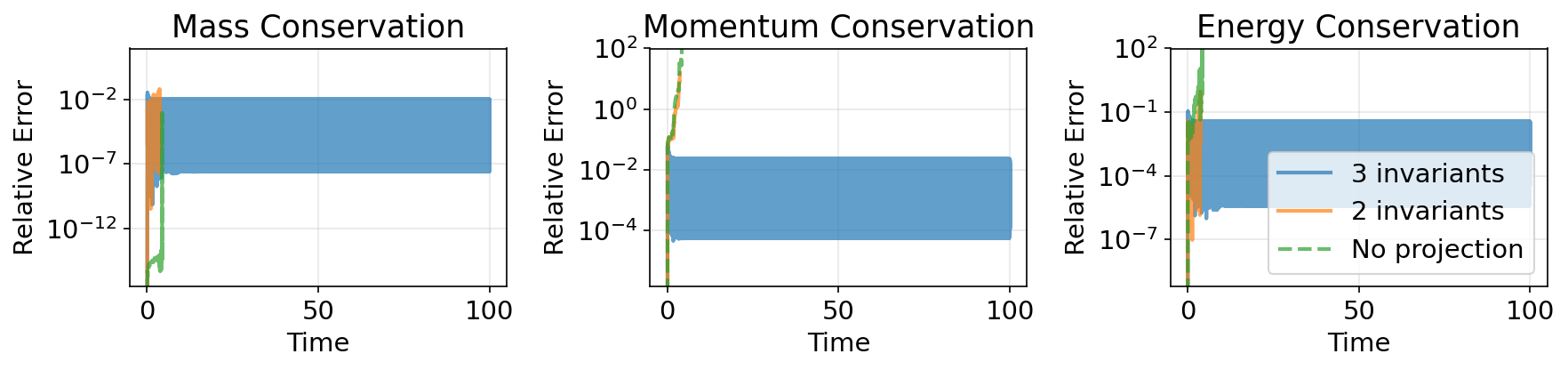} 
    \caption{Soliton speed: $c=8$.}
    \label{fig:kdv-inv3}
  \end{subfigure}
  \caption{KdV equation invariant preservation with increasingly challenging initial conditions.}
  \label{fig:kdv-invs}
\end{figure}

\begin{figure}[ht]   
  \centering
  \begin{subfigure}{0.4\textwidth}
    \includegraphics[width=\textwidth]{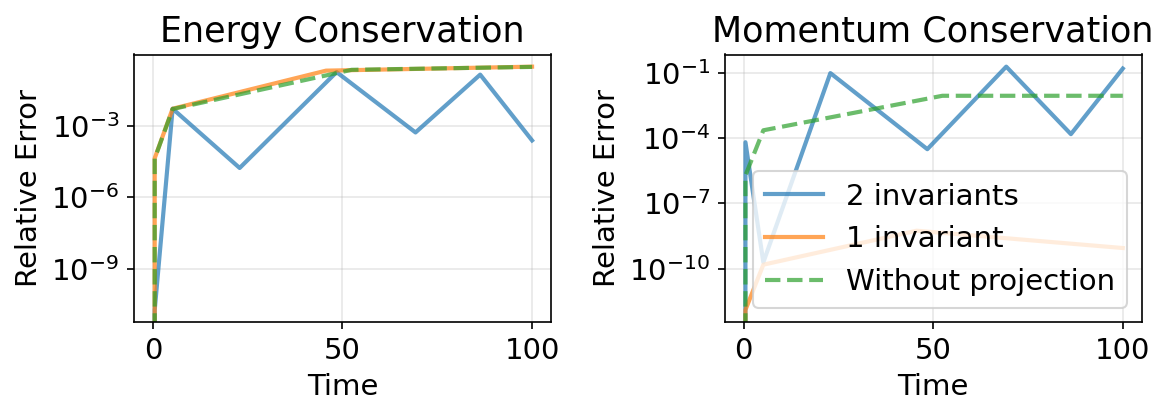} 
    \caption{Soliton speed: $c=0.02$.}
    \label{fig:ch-inv1}
  \end{subfigure}

  \begin{subfigure}{0.4\textwidth}
    \includegraphics[width=\textwidth]{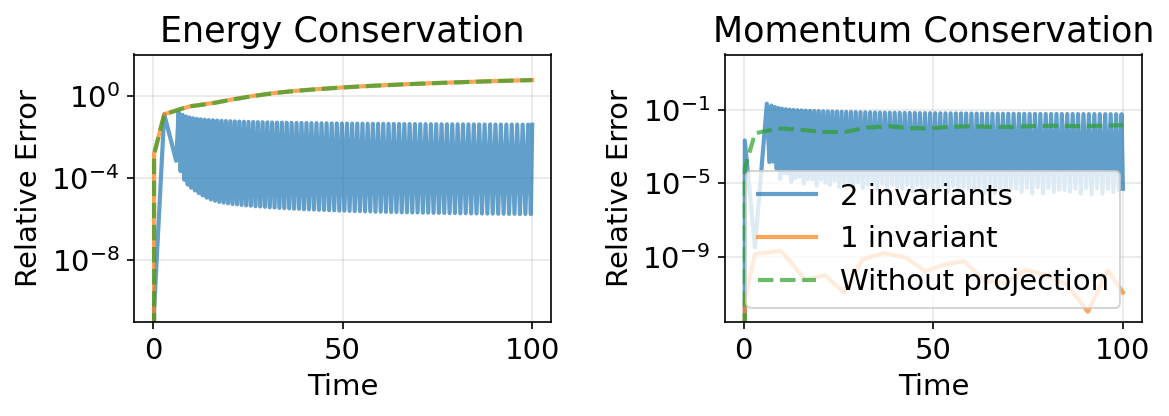} 
    \caption{Soliton speed: $c=0.2$.}
    \label{fig:ch-inv2}
  \end{subfigure}

  \begin{subfigure}{0.4\textwidth}
    \includegraphics[width=\textwidth]{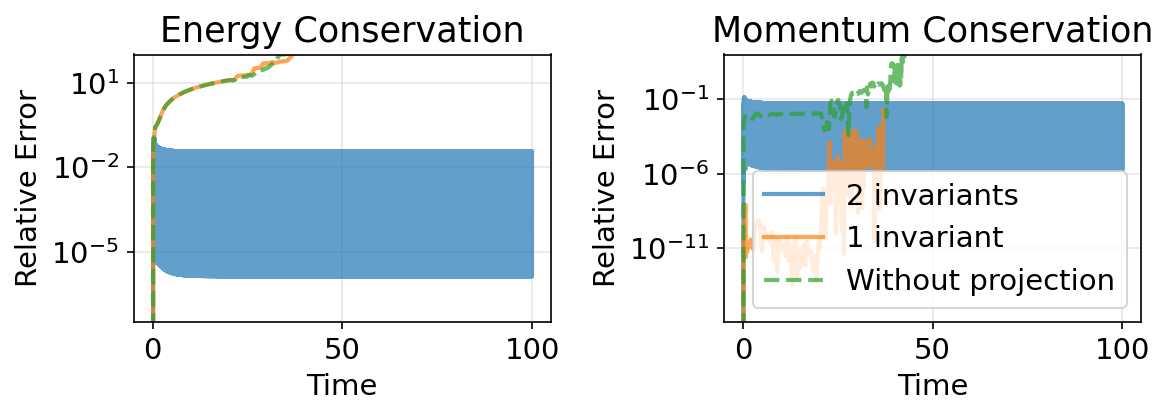} 
    \caption{Soliton speed: $c=2$.}
    \label{fig:ch-inv3}
  \end{subfigure}
  \caption{Camassa-Holm equation invariant preservation with increasingly challenging initial conditions.}
  \label{fig:ch-invs}
\end{figure}

\end{document}